\documentclass[11pt,reqno,UTF8,twoside]{amsart}
\usepackage{mathrsfs}
\usepackage{amsfonts,amssymb,amsmath,amsthm}
\usepackage{cite}
\usepackage[colorlinks=true,citecolor=red]{hyperref}
\usepackage{titletoc}
\usepackage{geometry}
\usepackage{bm}
\usepackage{indentfirst}
\usepackage{graphicx}
\usepackage{float} 
\usepackage{booktabs}
\usepackage{longtable}
\usepackage{subfigure}
\usepackage{stmaryrd}
\usepackage{enumerate}
\usepackage{tikz}
\usepackage{ulem}
\usepackage{color,soul}
\usepackage{setspace}
\usepackage{stmaryrd}
\usepackage[pagewise]{lineno} 
\geometry{a4paper}
\allowdisplaybreaks[2]
\raggedbottom
\usepackage{appendix}
\usepackage{cases}
\usepackage{todonotes}
\usepackage{enumitem}
\headsep 0cm \evensidemargin 0cm \oddsidemargin 0cm \textheight
24cm \textwidth 16cm \topmargin 0pt \headsep 16pt \footskip 27pt
\newcommand{\x}{\bm{x}}
\newcommand{\z}{\bm{z}}
\newcommand{\BT}{\bm{\theta}}
\newcommand{\he}{\mathfrak{e}}
\newcommand{\BV}{\bm{\varepsilon}}
\newcommand{\y}{\bm{y}}
\newcommand{\m}{\bm{m}}
\newcommand{\bk}{\bm{k}}

\newcommand{\e}{{\bm e}}
\newcommand{\R}{\mathbb R}

\newcommand{\Z}{\mathbb Z}

\newcommand{\C}{\mathbb C}

\newcommand{\q}{\bm{q}}

\newcommand{\A}{\mathcal{A}}
\newcommand{\Si}{\mathcal{S}}
\newcommand{\Ce}{\mathcal{C}}

\newcommand{\LL}{\mathbb{L}}

\newcommand{\N}{\mathbb{N}}
\newcommand{\T}{\mathbb{T}}

\newcommand{\sign}{{\rm sign}}

\newtheorem{thm}{Theorem}[section]

\newtheorem{lem}[thm]{Lemma}
\newtheorem{prop}[thm]{Proposition}

\newtheorem{rem}[thm]{\bf Remark}
\theoremstyle{definition}
\newtheorem{defn}[thm]{Definition}

\theoremstyle{statement}

\numberwithin{equation}{section}

\linespread{1.2}

\hypersetup{linkcolor=blue}

    \makeatletter
    \@namedef{subjclassname@2020}{\textup{2020} Mathematics Subject 
	Classification}
    \makeatother

 \begin{document}
    
    \title[ Quantum ergodicity for Dirichlet operators ]
	{QUANTUM ERGODICITY FOR DIRICHLET-TRUNCATED\\ OPERATORS ON $\Z^d$}

    \author[H. Cao, S. Xiang]{ {\small H\MakeLowercase{ongyi} C\MakeLowercase{ao}, S\MakeLowercase{hengquan} X\MakeLowercase{iang} }}
    
    \address[Hongyi Cao]{School of Mathematical Sciences, Peking University, 100871, Beijing, China.}
    \email{chyyy@stu.pku.edu.cn}

    \address[Shengquan  Xiang]{School of Mathematical Sciences, Peking University, 100871, Beijing, China.}
    \email{shengquan.xiang@math.pku.edu.cn}

    \subjclass[]{}

		\keywords{Quantum ergodicity, Periodic Schr\"odinger operators on $\Z^d$,   Dirichlet condition, Eigenfunction correspondence}

    \begin{abstract}

 In this paper, we prove quantum ergodicity (a form of delocalization for eigenfunctions) for the Dirichlet truncations of the adjacency matrix on $\Z^d$. We also extend the result  to the cases of finite range observables and periodic Schr\"odinger operators with periods of length at most two.  This work partially answers a question asked by McKenzie and Sabri (Comm. Math. Phys. {\bf 403}(3), 1477--1509(2023) \cite{MS23}).      
    \end{abstract}

    \maketitle

\section{Introduction} 
 Serving as the quantum counterpart of classical ergodic theory, 	{\it quantum ergodicity}  refers to study the statistical behavior of quantum systems in the macroscopic regime and answer the  fundamental question of whether eigenfunctions of the Laplacian equidistribute. 
 Originated from the research in quantum mechanics by 
Einstein  \cite{Ein17}, it has now been extensively developed to mathematical objects such as manifolds and graphs, having  strong connections to spectral theory, quantum chaos, and random matrix theory; See for instance the excellent monograph by Anantharamann \cite[Chapter 1]{Ana22}.   

The  pioneering works of Shnirelman, Colin de Verdière, and Zelditch  \cite{Col85,Shn74,Zel87} first  established quantum ergodicity for  compact Riemannian manifolds: 
	
	\vspace{0.1cm}   
  \begin{flushleft}
        \textbf{Quantum Ergodicity Theorem.} Let $(M, g)$ be a compact Riemannian manifold. 
 
    Assume that the geodesic flow of $M$ is ergodic with respect to the Liouville measure. Let $\left(\psi_n\right)_{n \in \mathbb{N}}$ be an orthonormal basis of $L^2(M, g)$ consisting  of eigenfunctions of the Laplace-Beltrami operator $\Delta$:
  \end{flushleft}
	$$
	-\Delta \psi_n= \lambda_n \psi_n, \quad \lambda_n \leq \lambda_{n+1} \longrightarrow+\infty.
	$$
 Define $N(\lambda)=\#\left\{n:\ \lambda_n \leq \lambda\right\}$.	Let $a$ be a continuous function on $M$. 
   
    Then
	$$	\lim_{\lambda\to+\infty}\frac{1}{N(\lambda)} \sum_{n:  \lambda_n \leq \lambda}\left|
    \int_M a(x)|\psi_n|^2 d \operatorname{Vol}(x) - \int_M a(x) d \operatorname{Vol}(x)\right|^2 =0.
	$$

    \vspace{1mm}
    
    This theorem  implies that most of eigenfunctions of $\Delta$ (with  sequence density $1$), the 
	measures $\left|\psi_n(x)\right|^2 d\operatorname{Vol}(x)$ converges weakly to the uniform measure $d \operatorname{Vol}(x)$. In another word, most of eigenfunctions of Laplace equidistribute on $M$ in a weak sense. Later on, Rudnick and
	Sarnak \cite{RS94} made a stronger,  so-called \textit{quantum unique ergodicity} conjecture: for compact
	negatively curved manifolds, all Laplace eigenfunctions should become equidistributed in the high
	frequency limit. The conjecture is still widely open, but significant progress has been made in the last twenty
	years. We refer,  though not exclusively,  to the works \cite{Ana08,DJ18,DJN22} and references therein for more details.
\vspace{2mm}
    
The investigation of quantum ergodicity has been  extended to  {\it graphs setting}, offering novel insights to study  the spectral and dynamical delocalization properties of infinite graphs. In this setting, quantum ergodicity means that, for a sequence of finite graphs $G_N$ converging  (in the sense of Benjamini-Schramm) to some infinite graph $G$, most eigenfunctions   of the  adjacency matrix $\A_{G_N}$   equidistribute  spatially in a weak sense as the  size of graphs grows.
	
Quantum ergodicity for the adjacency matrix on large regular graphs that are spectral expanders with  {\it tree-like structures} was first established in the fundamental work  \cite{AL15}. See also   \cite{Ana17} for other three  proofs from different insights.   In  the  breakthrough work  \cite{AS19a}, this result   was further extended  to
Schr\"odinger operators on non-regular graphs assuming  absolutely continuous spectrum, which had many important applications such as Anderson model on regular tree \cite{AS17} and the adjacency matrix on finite cone type trees \cite{AS19b}.

Moreover, it was proved in \cite{AGS21} that the stronger statement, quantum unique ergodicity does not hold for the above deterministic sequences of regular graphs.  We  also  mention  the works of \cite{BY17,BYY20} and the references therein for quantum
unique ergodicity results in a different direction of random graphs setting.
\vspace{0.2cm}

The above results require the graphs  to have tree-like structures  (graphs with few cycles).	
In contrast, quantum ergodicity for  {\it periodic graphs}  remained largely unexplored until the recent work \cite{MS23}, which established (partial) quantum ergodicity for  periodic Schr\"odinger operators on graphs  with periodic  conditions. Here 
 the ``partial'' condition is necessary and  means that if the observable takes the same value in each periodic block, then  quantum ergodicity holds.   Specifically, quantum ergodicity holds when the fundamental cell has only  one vertex.  
\vspace{0.2cm}

Another important graph model is the lattice with Dirichlet conditions.  Quantum ergodicity  for this model remains  unknown so far, as asked by  McKenzie and  Sabri \cite[Remark 2.6]{MS23}: 
 \begin{center}
	{	\textit{Does  quantum ergodicity (resp. partial quantum ergodicity) hold  for adjacency matrix (resp. Schrödinger operator) with periodic conditions replaced by  Dirichlet conditions?}}
	\end{center}
Note that the boundary condition plays a crucial role in approximating the infinite graph as  even  a minor  perturbation of the operator may significantly change the eigenfunctions' behavior. For instance, the proof in \cite{MS23} relies on    periodic conditions to use Floquet transform.  However, this Floquet transform could not be extended to   Dirichlet conditions case.

		\vspace{0.05cm}

	\subsection{Main results}
This paper partially answers the question raised in  \cite{MS23} by establishing two results:
    \begin{itemize}
        \item[(1)]  Quantum ergodicity for the adjacency matrix on $\Z^d$  with Dirichlet conditions.
        \item[(2)]  Partial quantum ergodicity for Schrödinger operators with Dirichlet condition and period lengths at most  two. 
    \end{itemize} 
Note that the question of partial quantum ergodicity for Schrödinger operators with larger period lengths still remains open.

\vspace{2mm}
    
Throughout this paper we fix the dimension $d\in \N$. Denote, for  $N_1\leq N_2\in\R$ and  $N\in\N$, 
\begin{align*}
		[[N_1,N_2]]:=[N_1,N_2]\cap\Z \; \textrm{ and } \; 
		\Gamma_N:=[[1,N]]^d.
	\end{align*} 
We consider  the adjacency matrix on $\Gamma_N$, $\A_{\Gamma_N}:  \ell^2(\Gamma_N)\to\ell^2(\Gamma_N) $ given by 
	$$(\A_{\Gamma_N}\psi)(\x):=\sum_{\substack{\y\in \Gamma_N \\ \|\y-\x\|_1=1 }}\psi(\y), \ \ \psi\in \ell^2(\Gamma_N),\   \x\in \Gamma_N,  $$
	where $\|\cdot\|_{1}$ is the $l^1$-norm.

We also  call $\A_{\Gamma_N}$  the {\it Dirichlet-truncated adjacency matrix}, as it represents the finite truncation of  the adjacency matrix $\A_{\Z^d}$ restricted to the lattice cube  $\Gamma_N$ under zero boundary condition.  
Another natural  boundary condition is the periodic condition, which has been well-studied in \cite{MS23}, see also Section \ref{est} for  definitions and statements of the results.  Notably, there are significant differences between the two conditions, which will be discussed in Section \ref{Diff}.
    
	Our first result  establishes quantum ergodicity for  $\A_{\Gamma_N}$.
	\begin{thm}\label{thm}
		Let $\{\psi_N^{(j)}\}_{j=1}^{N^d}$ be an orthonormal basis of $\ell^2(\Gamma_N)$ consisting of eigenfunctions of $\A_{\Gamma_N}$ and let  $a_N: \Gamma_N\times \Gamma_N\to \C$ be a  sequence of  diagonal  observables  satifying  $\|a_N\|_\infty:=\sup_{\x\in\Gamma_N}|a_N(\x,\x)|\leq 1$  for any $N\in \N$. Then we have 	 
		\begin{equation}\label{1}
			\lim _{N \rightarrow \infty} \frac{1}{N^d} \sum_{j=1}^{N^d}\left| \langle \psi_N^{(j)}, a_N \psi_N^{(j)}\rangle-\langle a_N\rangle\right|^2=0
		\end{equation}
		where  $ \langle\cdot,\cdot \rangle$ is the inner product so that  $ \langle \psi_N^{(j)}, a_N \psi_N^{(j)}\rangle=\sum_{\x \in \Gamma_N}a_N(\x,\x) | \psi_N^{(j)}(\x)|^2 $ and $\langle a_N\rangle:=\frac{1}{\#\Gamma_{N}} \sum_{\x \in \Gamma_N} a_N(\x,\x)$ is the uniform average. 
	\end{thm}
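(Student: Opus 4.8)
The plan is to exploit that the Dirichlet-truncated adjacency matrix $\A_{\Gamma_N}$ is explicitly diagonalizable. Since $\A_{\Gamma_N}=\sum_{i=1}^{d}\mathrm{Id}\otimes\cdots\otimes A^{(1)}\otimes\cdots\otimes\mathrm{Id}$ with $A^{(1)}$ the adjacency matrix of the path on $[[1,N]]$, whose eigenvalues are $2\cos\frac{\pi k}{N+1}$ with eigenvectors $\big(\sin\frac{\pi k x}{N+1}\big)_{x\in[[1,N]]}$ $(k\in[[1,N]])$, an orthonormal eigenbasis of $\A_{\Gamma_N}$ is
\[
\varphi_{\bk}(\x)=\prod_{i=1}^{d}\sqrt{\tfrac2{N+1}}\;\sin\tfrac{\pi k_i x_i}{N+1},\qquad
\A_{\Gamma_N}\varphi_{\bk}=\lambda_{\bk}\varphi_{\bk},\quad
\lambda_{\bk}=2\sum_{i=1}^{d}\cos\tfrac{\pi k_i}{N+1}\qquad(\bk\in\Gamma_N).
\]
First I would make two reductions. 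Putting $b_N(\x):=a_N(\x,\x)-\langle a_N\rangle$ and letting $B_N$ be the diagonal operator with these entries, one has $\langle\psi_N^{(j)},a_N\psi_N^{(j)}\rangle-\langle a_N\rangle=\langle\psi_N^{(j)},B_N\psi_N^{(j)}\rangle$, with $\sum_{\x\in\Gamma_N}b_N(\x)=0$ and $\|b_N\|_\infty\le2$. Then, to handle an \emph{arbitrary} orthonormal eigenbasis $\{\psi_N^{(j)}\}$ (the obstruction being the possibly large eigenspaces of $\A_{\Gamma_N}$), I would use the basis-free estimate through the spectral projections $\Pi_\lambda$ of $\A_{\Gamma_N}$:
\[
\sum_{j}\big|\langle\psi_N^{(j)},B_N\psi_N^{(j)}\rangle\big|^{2}
\ \le\ \sum_{\lambda}\big\|\Pi_\lambda B_N\Pi_\lambda\big\|_{\mathrm{HS}}^{2}
\ =\ \sum_{\substack{\bk,\bk'\in\Gamma_N\\ \lambda_{\bk}=\lambda_{\bk'}}}\big|\langle\varphi_{\bk},B_N\varphi_{\bk'}\rangle\big|^{2},
\]
the inequality because the diagonal of a matrix has Hilbert--Schmidt norm at most that of the whole matrix, and the equality by computing $\|\cdot\|_{\mathrm{HS}}$ in the explicit basis $\{\varphi_{\bk}\}$. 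Hence \eqref{1} reduces to proving that the right-hand side is $o(N^{d})$.

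Next I would compute the matrix elements. Since the $\varphi_{\bk}$ are real, expanding $\frac2{N+1}\sin\frac{\pi k_ix_i}{N+1}\sin\frac{\pi k_i'x_i}{N+1}=\frac1{N+1}\big(\cos\frac{\pi(k_i-k_i')x_i}{N+1}-\cos\frac{\pi(k_i+k_i')x_i}{N+1}\big)$ and multiplying over $i$ gives
\[
\langle\varphi_{\bk},B_N\varphi_{\bk'}\rangle=\sum_{\sigma\in\{-1,1\}^{d}}\Big(\prod_{i=1}^{d}\sigma_i\Big)\,c\big(k_1-\sigma_1k_1',\dots,k_d-\sigma_dk_d'\big),\quad
c(\m):=\frac1{(N+1)^{d}}\sum_{\x\in\Gamma_N}b_N(\x)\prod_{i=1}^{d}\cos\tfrac{\pi m_i x_i}{N+1},
\]
where $c$ is even and $2(N+1)$-periodic in each $m_i$. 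Two properties of $c$ are immediate: $c(0)=\frac1{(N+1)^{d}}\sum_{\x}b_N(\x)=0$; and, writing the cosines through the characters $e^{\pm i\pi m_ix_i/(N+1)}$ of $\Z/2(N+1)\Z$ and applying Parseval, $\sum_{\m\in(\Z/2(N+1)\Z)^{d}}|c(\m)|^{2}\le C_d(N+1)^{-d}\sum_{\x\in\Gamma_N}|b_N(\x)|^{2}\le C_d\|b_N\|_\infty^{2}$. By Cauchy--Schwarz over the $2^{d}$ sign patterns $\sigma$, it then suffices to bound, for each fixed $\sigma$, the quantity $\sum_{\bk,\bk':\,\lambda_{\bk}=\lambda_{\bk'}}\big|c(k_1-\sigma_1k_1',\dots,k_d-\sigma_dk_d')\big|^{2}$. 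Substituting $\bm j=(k_1-\sigma_1k_1',\dots,k_d-\sigma_dk_d')$ (which, for fixed $\bk$ and $\sigma$, determines $\bk'$) and using that $\cos$ is even — so $\cos\frac{\pi k_i'}{N+1}=\cos\frac{\pi(k_i-j_i)}{N+1}$ and thus $\lambda_{\bk'}=2\sum_i\cos\frac{\pi(k_i-j_i)}{N+1}$ — this sum is, after dropping for an upper bound the constraint that $\bk'\in\Gamma_N$, at most
\[
\sum_{\bm j}|c(\bm j)|^{2}\,\#\Big\{\bk\in\Gamma_N:\ \sum_{i=1}^{d}\cos\tfrac{\pi k_i}{N+1}=\sum_{i=1}^{d}\cos\tfrac{\pi(k_i-j_i)}{N+1}\Big\},
\]
the $\bm j$-sum running over a box that injects into one period of $c$.

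The step I expect to be the main obstacle — playing here the role that the Floquet transform played in the periodic case of \cite{MS23} — is the counting estimate: for every integer vector $\bm j\neq0$,
\[
\#\Big\{\bk\in\Gamma_N:\ \sum_{i=1}^{d}\cos\tfrac{\pi k_i}{N+1}=\sum_{i=1}^{d}\cos\tfrac{\pi(k_i-j_i)}{N+1}\Big\}\ \le\ 2N^{d-1}.
\]
To prove it I would fix $i_0$ with $j_{i_0}\neq0$, rewrite each summand via $\cos A-\cos C=-2\sin\frac{A+C}2\sin\frac{A-C}2$ as $-2\sin\frac{\pi j_i}{2(N+1)}\sin\frac{\pi(2k_i-j_i)}{2(N+1)}$; then for each of the $N^{d-1}$ choices of $(k_i)_{i\neq i_0}$ the equation becomes $\sin\frac{\pi(2k_{i_0}-j_{i_0})}{2(N+1)}=\mathrm{const}$, with nonzero coefficient $\sin\frac{\pi j_{i_0}}{2(N+1)}$ (as $0<|j_{i_0}|<2(N+1)$), and as $k_{i_0}$ ranges over $[[1,N]]$ the arguments $\frac{\pi(2k_{i_0}-j_{i_0})}{2(N+1)}$ lie in a real interval of length $\frac{\pi(N-1)}{N+1}<\pi$, on which $\sin$ takes every value at most twice; hence at most two admissible $k_{i_0}$. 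Granting this, the only term with $\bm j=0$ is killed by $c(0)=0$, and together with $\sum_{\bm j}|c(\bm j)|^{2}\le C_d\|b_N\|_\infty^{2}$ it follows that $\sum_{\bk,\bk':\,\lambda_{\bk}=\lambda_{\bk'}}|c(k_1-\sigma_1k_1',\dots)|^{2}\le2N^{d-1}C_d\|b_N\|_\infty^{2}$. Summing over the $2^{d}$ sign patterns and dividing by $N^{d}$,
\[
\frac1{N^{d}}\sum_{j}\big|\langle\psi_N^{(j)},B_N\psi_N^{(j)}\rangle\big|^{2}\ \le\ \frac{C_d'\,\|b_N\|_\infty^{2}}{N}\ \le\ \frac{4C_d'}{N}\ \longrightarrow\ 0\qquad(N\to\infty),
\]
which gives \eqref{1}. (No separate treatment of $\operatorname{Re}\,a_N$ and $\operatorname{Im}\,a_N$ is needed: every step above is $\C$-linear in $b_N$, and the Hilbert--Schmidt estimate does not use self-adjointness of $B_N$.)
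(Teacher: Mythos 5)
Your proof is correct and follows essentially the same route as the paper's first proof: reduce to the Hilbert--Schmidt norm of the block-diagonal compression $\sum_\lambda \Pi_\lambda B_N \Pi_\lambda$ (which is exactly the paper's time-average $a_N^\infty$), expand the matrix elements in the sine basis into discrete Fourier coefficients, control the multiplicity of eigenvalue coincidences by the same $2N^{d-1}$ counting argument via the sum-to-product identity (the paper's Lemma \ref{c1}), and close with a Parseval/Bessel bound. The only differences are cosmetic: you subtract the mean at the outset and apply Parseval on $(\Z/2(N+1)\Z)^d$ to the cosine coefficients, whereas the paper keeps the $\langle a_N\rangle$ term explicit and partitions the exponentials into $4^d$ orthogonal classes before applying Bessel's inequality.
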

 Theorem \ref{thm} says that for large $N$ and for most $j$, in a weak sense  by testing
a function $a_N$ on $\Gamma_N$,  the  probability measure 
	$\sum_{\x \in \Gamma_N}| \psi_N^{(j)}(\x)|^2\delta_{\x}$ is ``close'' to the uniform measure 	$\frac{1}{\#\Gamma_{N}}\sum_{\x \in \Gamma_N} \delta_{\x}$.
    
\vspace{2mm}

We can generalize Theorem \ref{thm} by replacing  diagonal observables  $a_N$ with finite range observables  $K_N$. Before presenting the theorem, we introduce some definitions.

For an observable $K: \Gamma_N\times \Gamma_N\to \C$ and a function $\psi\in \ell^2(\Gamma_N)$, we define  
\begin{equation}\label{dco}
	\langle K\rangle_{\psi}:=   \sum_{\z } \sum_{\x \in L_{\z} }  \frac{1}{\#L_{\z}} K(\x,\x+\z) \langle\psi,\rho_{\z}\psi\rangle,
\end{equation}
where  \begin{align}
\label{Lz}
	L_{\z}:= \{\x\in \Gamma_N:\ \x+\z\in \Gamma_N\}, \quad 
(\rho_{\z}\psi)(\x):=\begin{cases}
	\psi(\x+\z), \ \ &\x+\z\in\Gamma_{N}, \\
	0, \ \ &\x+\z\notin\Gamma_{N}.
\end{cases}
\end{align}

	\begin{thm}\label{thm.}
	Let $\{\psi_N^{(j)}\}_{j=1}^{N^d}$ be an orthonormal basis of $\ell^2(\Gamma_N)$ consisting of eigenfunctions of $\A_{\Gamma_N}$. Fixing $R\in \N$, let   $K_N$ be a sequence of finite range  observables  satisfying
	\begin{itemize}
		\item[\tiny$\bullet$] $K_N(\x,\y)=0$  for any $N\in \N$ and $\|\x-\y\|_1>R,$
		\item[\tiny$\bullet$] $|K_N(\x,\y)|\leq 1$ for any $N\in \N$ and $\x,\y\in \Gamma_{N}.$
	\end{itemize}  	 Then we have  	\begin{equation}\label{1.}
		\lim _{N \rightarrow \infty} \frac{1}{N^d} \sum_{j=1}^{N^d}\left| \langle \psi_N^{(j)}, K_N \psi_N^{(j)}\rangle-\langle K_N\rangle_{\psi_N^{(j)}}\right|^2=0.
	\end{equation} 
In particular, taking $R=0$ in the statement above gives back Theorem \ref{thm}.
\end{thm}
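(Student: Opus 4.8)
\emph{Overview.} The plan is to exploit the explicit diagonalization of $\A_{\Gamma_N}$, reduce \eqref{1.} (for an arbitrary eigenbasis) to a ``full variance'' estimate over pairs of eigenfunctions sharing an eigenvalue, expand the resulting matrix elements into pure Fourier modes via trigonometric identities, and then close the estimate using a Plancherel-type bound together with an upper bound for the level sets of $\bk\mapsto\lambda_{\bk}$ along shifted diagonals. The last ingredient is where the genuine work lies.

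\emph{Step 1: explicit spectrum and reduction to a variance bound.} Since $\Gamma_N$ is the Cartesian product of $d$ copies of the path $[[1,N]]$, $\A_{\Gamma_N}$ is a sum of commuting one-dimensional operators and carries the tensor-product eigenbasis $e_{\bk}(\x)=\prod_{i=1}^{d}\sqrt{2/(N+1)}\,\sin(k_i\pi x_i/(N+1))$ with $\A_{\Gamma_N}e_{\bk}=\lambda_{\bk}e_{\bk}$, $\lambda_{\bk}=\sum_{i=1}^{d}2\cos(k_i\pi/(N+1))$, $\bk\in\Gamma_N$. For any operator $B$ and any orthonormal eigenbasis $\{\phi_j\}$, Cauchy--Schwarz applied inside each eigenspace $V$ gives, independently of $\{\phi_j\}$,
\[
\sum_j|\langle\phi_j,B\phi_j\rangle|^2\ \le\ \sum_{\text{eigenspaces }V}\|\Pi_V B\Pi_V\|_{\mathrm{HS}}^2\ =\ \sum_{\lambda_{\bk}=\lambda_{\bm l}}|\langle e_{\bk},Be_{\bm l}\rangle|^2 .
\]
To apply this I would first record the elementary identity, valid for all $\psi$,
\[
\langle\psi,K_N\psi\rangle-\langle K_N\rangle_\psi=\sum_{\|\z\|_1\le R}\langle\psi,D_{g_\z}\rho_\z\psi\rangle,\qquad g_\z(\x):=K_N(\x,\x+\z)-\tfrac1{\#L_\z}\sum_{\x'\in L_\z}K_N(\x',\x'+\z),
\]
which follows directly from \eqref{dco}; here $g_\z$ has zero mean on $L_\z$, $\|g_\z\|_\infty\le 2$, and $D_{g_\z}\rho_\z$ does not depend on $\psi$. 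Taking $B_N=\sum_\z D_{g_\z}\rho_\z$ and using Cauchy--Schwarz in the $\le(2R+1)^d$ admissible shifts $\z$, Theorem \ref{thm.} reduces to showing that, for each fixed $\z$ with $\|\z\|_1\le R$ and each profile $g$ on $L_\z$ with zero mean and $\|g\|_\infty\le 2$,
\[
V_N(\z,g):=\frac1{N^d}\sum_{\lambda_{\bk}=\lambda_{\bm l}}\bigl|\langle e_{\bk},D_g\rho_\z e_{\bm l}\rangle\bigr|^2\ \xrightarrow[N\to\infty]{}\ 0 .
\]

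\emph{Step 2: Fourier expansion of the matrix elements.} Starting from $\langle e_{\bk},D_g\rho_\z e_{\bm l}\rangle=\sum_{\x\in L_\z}g(\x)\prod_i\frac2{N+1}\sin(\frac{k_i\pi x_i}{N+1})\sin(\frac{l_i\pi(x_i+z_i)}{N+1})$ and applying product-to-sum identities in each coordinate, I would expand this as
\[
\langle e_{\bk},D_g\rho_\z e_{\bm l}\rangle=\frac1{(N+1)^d}\sum_{\ep\in\{0,1\}^d}(-1)^{|\ep|}\sum_{\tau\in\{c,s\}^d}\Bigl(\prod_i\theta_i(\ep,\tau)\Bigr)\,\widehat g^{\,\tau}\!\bigl(\m(\ep)\bigr),
\]
where $|\theta_i|\le 1$, the $i$-th entry of $\m(\ep)$ is $k_i-l_i$ if $\ep_i=0$ and $k_i+l_i$ if $\ep_i=1$, and $\widehat g^{\,\tau}(\m)=\sum_{\x\in L_\z}g(\x)\prod_i\mathrm{trig}_{\tau_i}\!\big(\tfrac{m_i\pi x_i}{N+1}\big)$ is a mixed cosine/sine transform of $g$ ($\mathrm{trig}_c=\cos$, $\mathrm{trig}_s=\sin$). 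The essential point, which makes the whole variance (diagonal and off-diagonal together) small and which also cleanly absorbs the subtracted average $\langle K_N\rangle_\psi$, is that $\m(\ep)=\bm 0$ forces $\ep=\bm 0$ and $\bk=\bm l$, and then every $\widehat g^{\,\tau}(\bm 0)$ vanishes because $g$ has zero mean on $L_\z$ (and likewise $\widehat g^{\,\tau}(\m)=0$ whenever $m_i=0$ with $\tau_i=s$). Cauchy--Schwarz in $(\ep,\tau)$ then yields
\[
V_N(\z,g)\ \le\ \frac{4^d}{N^d(N+1)^{2d}}\sum_{\ep,\tau}\ \sum_{\lambda_{\bk}=\lambda_{\bm l}}\bigl|\widehat g^{\,\tau}(\m(\ep))\bigr|^2 .
\]

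\emph{Step 3: the level-set count (the crux) and conclusion.} Fix $\ep,\tau$. Given $\m(\ep)=\m$, a pair $(\bk,\bm l)$ is determined by $\bk$ alone ($l_i$ being read off from $m_i$), and the constraint $\lambda_{\bk}=\lambda_{\bm l}$ becomes $\sum_i c_i(k_i)=0$ with each coordinate function of the form $c_i(k)=-2\sin\!\bigl(\tfrac{m_i\pi}{2(N+1)}\bigr)\sin\!\bigl(\tfrac{(2k-m_i)\pi}{2(N+1)}\bigr)$. As $k$ ranges over its (at most $N$) admissible values, the argument of the outer sine traverses an arithmetic progression spanning an interval of length strictly less than $\pi$; since a sine is at most two-to-one on such an interval (or $c_i$ is identically $0$, in the degenerate cases), the number of $\bk$ solving $\sum_i c_i(k_i)=0$ is at most $2N^{d-1}$ whenever $\m\ne\bm 0$. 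I expect this level-set bound — the effective, uniform-in-$N$ statement that $\bk\mapsto\lambda_{\bk}$ has lower-dimensional fibers along every shifted diagonal — to be the main obstacle: it is the only place where one must genuinely control the arithmetic of the eigenvalues $\sum_i 2\cos(k_i\pi/(N+1))$, and obtaining the exponent $d-1$ (rather than $d$, which would be useless) is exactly what makes quantum ergodicity hold here. Granting it, combine it with the Plancherel-type inequality $\sum_{\m}|\widehat g^{\,\tau}(\m)|^2\lesssim_d(N+1)^d\|g\|_{\ell^2}^2\lesssim_d N^{2d}$ (the relevant cosine/sine families on $[[1,N]]$ form tight frames with constant $\asymp(N+1)^d$, and the range of $\m(\ep)$ meets only $O_d(1)$ periods) to get $\sum_{\lambda_{\bk}=\lambda_{\bm l}}|\widehat g^{\,\tau}(\m(\ep))|^2\le 2N^{d-1}\sum_\m|\widehat g^{\,\tau}(\m)|^2\lesssim_d N^{3d-1}$, hence $V_N(\z,g)\lesssim_d N^{3d-1}/(N^d(N+1)^{2d})\lesssim_d N^{-1}\to 0$. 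Summing over the finitely many $\z$ finishes Theorem \ref{thm.}; specializing to $R=0$, $\z=\bm 0$, $g=a_N-\langle a_N\rangle$ (where only $\tau=(c,\dots,c)$ survives and $\widehat g^{\,\tau}$ is the discrete cosine transform of $g$) recovers Theorem \ref{thm}.
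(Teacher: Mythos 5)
Your proposal is correct, but it reaches Theorem \ref{thm.} by a genuinely different route than the paper. The paper proves this theorem via the eigenfunction correspondence $\he:\ell^2(\Gamma_N)\to\ell^2(\Gamma_{2N+2})$ (Proposition \ref{eb}), importing the periodic-conditions result of \cite{MS23} for finite range observables (Theorem \ref{nond}) and then converting the periodic correlator to the Dirichlet one via the averaged observable $\widetilde{K_N}$. You instead extend the paper's \emph{first}, self-contained proof of Theorem \ref{thm} (the four-step sine-basis argument of Section \ref{pf1}) from diagonal to finite range observables — precisely the extension the authors chose not to carry out. The ingredients match up cleanly: your reduction $\sum_j|\langle\phi_j,B\phi_j\rangle|^2\le\sum_V\|\Pi_VB\Pi_V\|_{\rm HS}^2$ is the projector reformulation of the paper's Steps 2--3 ($a_N^\infty=\sum_V\Pi_Va_N\Pi_V$ plus the Hilbert--Schmidt bound \eqref{4}); your zero-mean profiles $g_\z$ are exactly $K_N-\widetilde{K_N}$ written shift by shift, which both absorbs the correlator $\langle K_N\rangle_\psi$ and kills the zero-frequency Fourier mode; your level-set count is identical to Lemma \ref{c1} (same product-to-sum manipulation, same bound $2N^{d-1}$, including the degenerate coordinates with $m_i=0$); and your Plancherel step is the paper's Step 4.4. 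What your route buys is self-containedness (no reliance on \cite{MS23}) and a direct view of why the $\z\ne\bm 0$ correlators take the form \eqref{dco}; what the paper's route buys is brevity and reusability of the correspondence for Theorem \ref{ps}. Two small points to tighten in a final write-up: (i) the ``tight frame'' assertion $\sum_{\m}|\widehat g^{\,\tau}(\m)|^2\lesssim_d(N+1)^d\|g\|_{\ell^2}^2$ should be justified as in the paper's Step 4.4 — write the cosines/sines as combinations of exponentials $e^{\pm i m\pi x/(N+1)}$, zero-extend $g$ to $[[0,2N+1]]^d$, and note that frequencies lying in a single window of length $2(N+1)$ give an orthogonal family there, so Bessel applies to $O_d(1)$ such windows; (ii) you should record that the coefficients $\theta_i(\ep,\tau)$ depend on $\bm l$ and $\z$, which is harmless only because you discard them via $|\theta_i|\le1$ before grouping by $\m$ — as written this is implicit. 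Neither point is a gap.
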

	Since $K_N$ is	supported at distance $\leq R$ from the diagonal,  the inner product
	 $	\langle \psi_N^{(j)}, K_N \psi_N^{(j)}\rangle$ is  $$  \sum_{\|\z\|_1\leq R }\  \sum_{\x \in L_{\z} }K_N(\x,\x+\z) \overline{\psi_N^{(j)}(\x)}(\rho_{\z}\psi_N^{(j)})(\x).$$
	 Thus, Theorem \ref{thm.}   says that for large $N$, for most $j$ and  for any  $\|\z\|_1\leq R$,  in a weak sense  by testing
	 a function on the  set   $L_{\z}$, the  signed measure
	 $$\sum_{\x \in L_{\z}}\overline{\psi_N^{(j)}(\x)}(\rho_{\z}\psi_N^{(j)})(\x)\delta_{\x}$$ is ``close'' to the dilated uniform measure 	$$ \langle\psi_N^{(j)},\rho_{\z}\psi_N^{(j)}\rangle\cdot \frac{1}{\#L_{\z}}\sum_{\x \in L_{\z}} \delta_{\x} .$$
\begin{rem}
The  value $\langle K\rangle_{\psi}$ in \eqref{1.} is termed   the  \textit{eigenfunction correlator}, a quantity previously investigated in regular graphs \cite{Ana17} and periodic graphs \cite{MS23}. Interestingly, we find a distinctive property (c.f., Proposition \ref{pwucha}) that holds for one-dimensional Dirichlet conditions operators, yet fails in the aforementioned two cases. Detailed  discussions are provided  in Section~ \ref{FEC}.
\end{rem}
	Next, we consider the  {\it Dirichlet-truncated periodic Schr\"odinger operators}. 

	Let $\bm e_l$ be the normalized  vector that takes the value  $1$ at the $l$-th site. Given $\q=(q_1,\cdots,q_d)\in \N^d$, let $\LL$ be a lattice of full rank with a basis $q_1\e_1,\cdots,q_d\e_d$:
	$$\LL=\{n_1q_1\e_1+\cdots+n_dq_d\e_d:\ n_1,\cdots,n_d\in\Z\}.$$
	Let $H$ be the  Schr\"odinger operator on $\ell^2(\mathbb{Z}^d)$ with potentials $V$ periodic with respect to $\LL$:
	\begin{equation}\label{pz}
		(H \psi)(\x)=(\A_{\Z^d} \psi)(\x)+V(\x) \psi(\x):=\sum_{\substack{\y\in \Z^d \\ \|\y-\x\|_1=1 }}\psi(\y)+V(\x) \psi(\x), 
	\end{equation}
	where 
	$V(\x+\y)=V(\x)$ for any $\y \in \LL.$
    
	Define 
	$$\Lambda_N:=\prod_{l=1}^d[[1,q_lN]].$$
	In particular, we call $\Lambda_1=\prod_{l=1}^d[[1,q_l]]$ the fundamental block.  
    
	We denote by $\mathcal{H}_{\Lambda_N}$ the Dirichlet truncation of $H$ to $\Lambda_{N}$:
	\begin{equation}\label{dr}
		(\mathcal{H}_{\Lambda_N} \psi)(\x):=(\A_{\Lambda_N}\psi)(\x)+V(\x) \psi(\x), \ \ \psi\in \ell^2(\Lambda_N),\   \x\in \Lambda_N,
	\end{equation}
	where 
	$$(\A_{\Lambda_N}\psi)(\x):=\sum_{\substack{\y\in \Lambda_N \\ \|\y-\x\|_1=1 }}\psi(\y).$$
	It is well-known that the spectrum of  \eqref{pz} is purely absolutely continuous (c.f., \cite{Kuc16}), which is also called the   spectral delocalization in the literature of discrete Schr\"odinger operators. Thus it is natural to ask: Are there any   ergodicity/delocalization properties on the spatial  behavior of the eigenfunctions of $\mathcal{H}_{\Lambda_N}$  as $\Lambda_N$ converges to $\Z^d$?  
	
	Note that  the  eigenfunctions of $\mathcal{H}_{\Lambda_N}$ may distribute sensitively depending on the potential.
	\begin{prop}\label{app}
	Generally, quantum ergodicity does not hold  for $\mathcal{H}_{\Lambda_N}$.
	\end{prop}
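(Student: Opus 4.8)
The plan is to exhibit a single $\LL$-periodic potential together with a single uniformly bounded sequence of diagonal observables for which the quantum variance stays bounded below by an absolute positive constant for all $N$; that is all that ``generally, quantum ergodicity does not hold'' requires. The mechanism I would exploit is \emph{spectral isolation}: a sufficiently deep ``checkerboard'' potential splits the spectrum of $\mathcal{H}_{\Lambda_N}$ into two well-separated bunches of eigenvalues, and the eigenfunctions in the lower bunch --- a subsequence of density exactly $\tfrac12$ --- concentrate almost entirely on the sublattice where the potential vanishes.

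Concretely, I would take $\q=(2,\dots,2)$, so that $\Lambda_N=[[1,2N]]^d$, and set $V(\x):=M\,\mathbf{1}\{x_1+\dots+x_d\ \text{even}\}$ for a large fixed constant $M$; this $V$ is $\LL$-periodic. Let $E,O\subset\Lambda_N$ be the two colour classes of the bipartition of $\Lambda_N$ by the parity of $x_1+\dots+x_d$, so $V\equiv M$ on $E$ and $V\equiv 0$ on $O$; the reflection $\x\mapsto(2N+1-x_1,x_2,\dots,x_d)$ maps $\Lambda_N$ bijectively to itself and flips that parity, hence $\#E=\#O=\tfrac12\#\Lambda_N$. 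In the decomposition $\ell^2(\Lambda_N)=\ell^2(E)\oplus\ell^2(O)$ the operator reads $\mathcal{H}_{\Lambda_N}=\left(\begin{smallmatrix}MI&B\\ B^{*}&0\end{smallmatrix}\right)$, where $B$ collects the hopping terms; since $\A_{\Z^d}$ is bipartite for this colouring, $B$ is the off-diagonal block of $\A_{\Lambda_N}$ and $\|B\|\le\|\A_{\Lambda_N}\|\le 2d$.

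Next I would pin down the spectrum by Courant--Fischer. Testing the quadratic form on the $\#O$-dimensional space $\ell^2(O)$, where it vanishes identically (the adjacency part is off-diagonal for the bipartition and $V\equiv 0$ there), gives $\lambda_{\#O}(\mathcal{H}_{\Lambda_N})\le 0$; testing on $\ell^2(E)$, where the form equals $M\|\cdot\|^2$, gives $\lambda_{\#O+1}(\mathcal{H}_{\Lambda_N})\ge M$. So exactly $\#O$ eigenvalues (counted with multiplicity) lie in $(-\infty,0]$ and the rest in $[M,+\infty)$, with a clean gap. Then, for any eigenpair $(\lambda,\psi)$ with $\|\psi\|=1$ and $\lambda\le 0$, the first block row $M\psi_E+B\psi_O=\lambda\psi_E$ gives $\psi_E=(\lambda-M)^{-1}B\psi_O$, whence $\|\psi_E\|^2\le\|B\|^2/M^2\le 4d^2/M^2$ (using $\lambda\le 0<M$ and $\|\psi_O\|\le 1$). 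Taking $M:=4d$, say, every eigenfunction with eigenvalue $\le 0$ carries at least $\tfrac34$ of its $\ell^2$-mass on $O$.

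Finally I would take the diagonal observable $a_N:=\mathbf{1}_O$, so that $\|a_N\|_\infty=1$, $\langle a_N\rangle=\#O/\#\Lambda_N=\tfrac12$, and $\langle\psi_N^{(j)},a_N\psi_N^{(j)}\rangle=\|(\psi_N^{(j)})_O\|^2\ge\tfrac34$ for each of the exactly $\#O$ indices $j$ whose eigenvalue is $\le 0$, regardless of the orthonormal eigenbasis chosen. Keeping only those indices,
\[
\frac{1}{\#\Lambda_N}\sum_{j=1}^{\#\Lambda_N}\bigl|\langle\psi_N^{(j)},a_N\psi_N^{(j)}\rangle-\langle a_N\rangle\bigr|^2\ \ge\ \frac{\#O}{\#\Lambda_N}\Bigl(\tfrac34-\tfrac12\Bigr)^2=\frac{1}{32}
\]
for every $N$, so the variance does not tend to $0$ and quantum ergodicity fails. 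The one point needing genuine care is the clean spectral separation into exactly $\#O$ low-lying eigenvalues --- this is precisely what turns ``the lower half of the spectrum'' into a legitimate density-$\tfrac12$ subsequence of non-equidistributed eigenfunctions --- but the two Courant--Fischer test subspaces dispose of it, and beyond that the argument is an elementary resolvent estimate with no real obstacle. (A period-two potential supported on $\{x_1\ \text{even}\}$, split by the parity of $x_1$ alone, works just as well: the hopping in the remaining $d-1$ directions then contributes a bounded term to the diagonal blocks, which the same Courant--Fischer bounds absorb.)
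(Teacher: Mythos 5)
Your proof is correct and rests on the same mechanism as the paper's own: a deep period-two potential splits the spectrum of $\mathcal{H}_{\Lambda_N}$ into two clusters of equal cardinality, and a resolvent-type estimate forces every eigenfunction of the low cluster to carry most of its mass on the zero-potential sublattice, so the indicator of that sublattice witnesses a quantum variance bounded below uniformly in $N$. The paper takes $d=1$ with $V=M\mathbf{1}_{\{2\mid x\}}$, counts the two clusters by perturbation theory and bounds the mass via the eigenvalue equation plus Cauchy--Schwarz, while you use Courant--Fischer test subspaces and a block decomposition in general dimension and, unlike the paper, explicitly exhibit the observable and the lower bound $1/32$ --- a slightly more complete write-up of essentially the same argument.
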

	 However, we can expect the partial quantum ergodicity, in the sense that the  $\ell^2$-mass
	of most  eigenfunctions of $\mathcal{H}_{\Lambda_N}$  over every periodic block $\Lambda_1+\bk$ $(\bk\in \mathbb{L})$ is approximately the same so that if the observable takes the same value in each periodic block, then  quantum ergodicity holds. 	
	
	Partial quantum ergodicity for periodic Schr\"odinger  operators on $\Z^d$ with periodic conditions was  proved by \cite{MS23} combined with  \cite{Liu24}, where \cite{MS23}  established a general criteria concerning Floquet eigenvalues  for (partial) quantum ergodicity on periodic graphs, and in the case of $\Z^d$ lattice,  this criteria was verified by  \cite{Liu24} through  delicate analysis of Bloch varieties. 
	
In  the case of Dirichlet conditions, we obtain:    
	\begin{thm}\label{ps}
		Assume that  \begin{equation}\label{cd}
			q_l\in \{1,2\} \ \  \text{{\rm for} $1\leq l\leq d$.}
		\end{equation} Let $\mathcal{H}_{\Lambda_N}$ be the operator defined by \eqref{dr} and $\{\psi_N^{(j)}\}_{j=1}^{\#\Lambda_N}$ be an orthonormal basis of $\ell^2(\Lambda_N)$ consisting of eigenfunctions of $\mathcal{H}_{\Lambda_N}$. 
		Assume  the sequence of  diagonal  observables $a_N: \Lambda_N\times \Lambda_N\to \C$ satisfies: 
		\begin{itemize}
        [leftmargin=2em]
			\item[\tiny$\bullet$] $\|a_N\|_{\infty}\leq 1$ for any $N\in \N$.
			\item[\tiny$\bullet$] For all $N\in \N$  and  $\x,\y\in \Lambda_1$, 	\begin{equation}\label{lc}
				\sum_{\bk\in \mathbb{L}_N}a_N(\x+\bk,\x+\bk)=\sum_{\bk\in \mathbb{L}_N}a_N(\y+\bk,\y+\bk),
			\end{equation} where $\LL_N=\{n_1q_1\e_1+\cdots+n_dq_d\e_d:\ n_1,\cdots,n_d\in[[0,N-1]]\}.$
		In particular, \eqref{lc} holds if $a_N$ is locally constant, in the sense that it takes a constant value on each periodic
			block: $a_N(\x+\bk,\x+\bk)=a_N(\y+\bk,\y+\bk)$ for any $ \x,\y\in \Lambda_1,\bk\in \mathbb{L}_N$.
		\end{itemize}  Then we have 	 
		\begin{equation*}\label{11}
			\lim _{N \rightarrow \infty} \frac{1}{\#\Lambda_{N}} \sum_{j=1}^{\#\Lambda_{N}}\left| \langle \psi_N^{(j)}, a_N \psi_N^{(j)}\rangle-\langle a_N\rangle\right|^2=0,
		\end{equation*}
		where  $\langle a_N\rangle$ is the uniform average of $a_N$ on $\Lambda_{N}$.
	\end{thm}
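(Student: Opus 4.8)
The plan is to reduce the periodic Schrödinger case with $q_l\in\{1,2\}$ to the already-established adjacency-matrix result, Theorem \ref{thm}, by a tensor/folding argument. First I would observe that because the lattice $\Lambda_N=\prod_l[[1,q_lN]]$ is a product and the adjacency matrix $\A_{\Lambda_N}$ is a sum of one-dimensional nearest-neighbour operators acting on each coordinate, the relevant spectral decomposition factorizes across directions. The only directions that carry a genuine potential are those with $q_l=2$; in the directions with $q_l=1$ the potential is constant (periodicity with period $1$) and can be absorbed into an overall energy shift, which does not affect eigenfunctions. So the heart of the matter is the one-dimensional period-$2$ Schrödinger operator on $[[1,2N]]$, i.e. the Jacobi-type matrix with alternating diagonal entries $\alpha,\beta,\alpha,\beta,\dots$. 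The key algebraic fact I would exploit is that such a period-$2$ operator is, up to a constant shift, unitarily equivalent on its spectral subspaces to (a function of) the free adjacency matrix on a smaller path: squaring $\mathcal H-\tfrac{\alpha+\beta}{2}$ and restricting to even/odd sublattices produces, modulo boundary corrections of size $O(1)$, the adjacency matrix on $[[1,N]]$ (in $d$ dimensions, on $\Gamma_N$). This is the discrete analogue of the standard "doubling" trick for dimerized chains.

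Next I would set up the dictionary carefully. Let $\Si=\{l:\ q_l=2\}$. Partition $\Lambda_N$ into the $2^{|\Si|}$ sublattices indexed by parities $\sigma\in\{0,1\}^{\Si}$; each sublattice is an affine copy of $\Gamma_N'$ for an appropriate box. On each such sublattice the operator $\mathcal H_{\Lambda_N}$, after subtracting the mean potential and squaring in the period-$2$ directions, agrees with $\A$ on $\Gamma_N'$ up to a boundary-supported perturbation of rank $O(N^{d-1})$ and bounded norm. I would then transport an orthonormal eigenbasis: given eigenfunctions $\psi_N^{(j)}$ of $\mathcal H_{\Lambda_N}$, their restrictions/symmetrizations to the sublattices furnish (after the unitary folding) vectors that are \emph{almost} eigenfunctions of $\A_{\Gamma_N'}$, with error controlled by the rank-$O(N^{d-1})=o(N^d)$ perturbation. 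A standard eigenvalue-perturbation / Hilbert–Schmidt argument (of the type used in \cite{AS19a,MS23} to show that low-rank modifications do not destroy a variance-type quantum-ergodicity estimate) then lets me pass the conclusion of Theorem \ref{thm} across the folding map. Finally I would translate the observable: condition \eqref{lc} is precisely what guarantees that a diagonal observable $a_N$ on $\Lambda_N$, when averaged over the $\LL_N$-translates within each residue class of $\Lambda_1$, descends to a diagonal observable on $\Gamma_N'$ with the \emph{same} uniform average $\langle a_N\rangle$; the cross terms between distinct sublattices contribute to the off-diagonal (range $\leq$ const) part and are handled by Theorem \ref{thm.} rather than Theorem \ref{thm}.

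Carrying this out, the variance sum $\frac1{\#\Lambda_N}\sum_j|\langle\psi_N^{(j)},a_N\psi_N^{(j)}\rangle-\langle a_N\rangle|^2$ splits into: (i) a main term which, after folding, is bounded by the quantum-ergodicity variance for $\A_{\Gamma_N'}$ tested against the descended observable, hence $\to 0$ by Theorem \ref{thm} (resp. \ref{thm.} for the off-diagonal pieces); and (ii) error terms coming from the $O(N^{d-1})$-rank boundary perturbation and from the non-orthonormality introduced by folding, each of which I would bound by Cauchy–Schwarz together with $\|a_N\|_\infty\leq1$ and the rank estimate, giving $O(N^{-1})\to0$.

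The main obstacle I anticipate is the folding/perturbation step: making precise that the period-$2$ operator on $[[1,2N]]$ with Dirichlet ends is unitarily equivalent, up to a boundary perturbation whose rank and norm are both controlled, to (a polynomial in) $\A_{[[1,N]]}$, \emph{and} that this equivalence interacts correctly with the full orthonormal eigenbasis rather than a single eigenfunction — i.e. that transported eigenfunctions remain (nearly) orthonormal and (nearly) spectral, uniformly in $N$. This is exactly where the restriction $q_l\le 2$ is used: for $q_l=2$ the squaring trick works cleanly, whereas for $q_l\ge 3$ one would need to iterate and the boundary corrections no longer organize into a single low-rank perturbation, which is why the general case remains open. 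Once the perturbation bookkeeping is in place, everything else is a routine combination of Cauchy–Schwarz and the two theorems already proved.
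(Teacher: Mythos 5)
Your reduction has a genuine gap at its central step: you transport eigenfunctions of $\mathcal{H}_{\Lambda_N}$ through a folding map that is only an \emph{approximate} intertwining (exact ``up to a boundary-supported perturbation of rank $O(N^{d-1})$''), and you then invoke Theorem \ref{thm} (or \ref{thm.}) for the folded vectors. But those theorems require an orthonormal basis of \emph{exact} eigenfunctions of $\A_{\Gamma_N'}$; a low-rank, bounded-norm perturbation of the operator can change the eigenbasis completely, because the eigenvalues $\sum_l 2\cos\frac{k_l\pi}{N+1}$ are massively degenerate and even a rank-one perturbation can rotate eigenvectors arbitrarily inside a degenerate eigenspace. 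The Hilbert--Schmidt bound \eqref{4} controls perturbations of the \emph{observable}, not of the \emph{operator}, so ``Cauchy--Schwarz plus the rank estimate'' does not give the claimed $O(N^{-1})$ error. This is precisely the difficulty the paper emphasizes (a minor perturbation of the operator may significantly change the eigenfunctions) and formalizes as the open Question 2 in Section \ref{zh}; your argument implicitly assumes a positive answer to it. A second, related problem: by Proposition \ref{app}, eigenfunctions of $\mathcal{H}_{\Lambda_N}$ can concentrate almost entirely on one parity sublattice, so your restricted/folded vectors are far from normalized (their norms depend wildly on $j$), and the ``nearly orthonormal, nearly spectral'' bookkeeping cannot be made uniform in $j$.

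The paper's actual proof avoids perturbation entirely. It extends the eigenfunction correspondence $\he$ of Section \ref{map} by zero-extension and antisymmetric reflection to the enlarged box $\Omega_N=\prod_l[[1,2q_lN+2]]$, and uses the hypothesis $q_l\in\{1,2\}$ to guarantee that the reflected potential coincides with the periodic extension of $V$ — so each $\he\psi_N^{(j)}$ is an \emph{exact} eigenfunction of the exact periodic operator $\mathfrak{H}_{\Omega_N}$, with $\{\he\psi_N^{(j)}\}$ exactly orthonormal. Partial quantum ergodicity for $\mathfrak{H}_{\Omega_N}$ (Theorem \ref{tmm1}, via \cite{MS23,Liu24}, with the Floquet criterion modified to \eqref{ass} to accommodate unequal numbers of periods in different directions) is then transferred back, with condition \eqref{lc} ensuring the correct comparison constant $\langle a_N\rangle$. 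Your instinct that $q_l\le 2$ enables a doubling/symmetry trick is right, but the trick must produce exact eigenfunctions of an operator for which quantum ergodicity is already known, not approximate eigenfunctions of the free adjacency matrix.
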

    Theorem \ref{ps} is restricted to the  case of period lengths at most two (assumption \eqref{cd}) due to technical reasons.  For a detailed explanation and related further questions, we refer  to Section  \ref{zh}.

    \subsection{Comparison between periodic   conditions  and Dirichlet   conditions}\label{Diff}
The analysis for periodic conditions was  well-established in \cite{MS23}. Here we outline the key  differences between the two boundary conditions, with  emphasis on  the difficulties inherent in  Dirichlet conditions:
\begin{itemize}[leftmargin=2em]
    \item[\tiny$\bullet$] \textit{Graph structure:}
From a graph-theoretic perspective,  periodic boundary conditions correspond to cycle graphs (or $d$-dimensional  toroidal graphs), while Dirichlet  conditions correspond to finite lattice graphs (or cube graphs with zero boundaries). The former exhibit stronger homogeneity,  as the  $r$-balls  (i.e., $B_G(\x,r):=\{\y\in G:\ {\rm d}(\x,\y)\leq r\}$)     in such a graph are isomorphic for  all vertices $\x$. It seems plausible that
this homogeneity supports the  equidistribution  of eigenfunctions.
\item[\tiny$\bullet$] \textit{Eigenbasis:}
Periodic conditions admit a canonically equidistributed eigenbasis, namely Bloch states (e.g., \cite[Section 5.2]{MS23}) and the question  lies  in addressing  the multiplicity of eigenvalues and establishing control for all possible eigenbases. While  Dirichlet  conditions lack such a privileged basis. Even for the adjacency matrix on finite lattices, quantum ergodicy is highly non-trivial as  the eigenbasis comprises sine functions, which seem incompatible with equidistribution. 
\item[\tiny$\bullet$] \textit{Methodology:} For periodic conditions, \cite{MS23}   employed Floquet transforms to establish diagonaliztion  and  Floquet eigenvalues to analyze eigenvalue multiplicity. While as noted in  \cite[Remark 2.6]{MS23}, these  techniques  could not be extended to   Dirichlet conditions and a natural
candidate for Dirichlet conditions is the  sine transform. However, the algebraic structure of sine basis lacks the well-behaved translational properties of exponentials (Bloch states) used in periodic conditions, obstructing  direct generalization of the quantum ergodicity proof. This problem requires another  framework to address.
\end{itemize}

	\subsection{Main ideas and new ingredients} We provide  two different proofs for Theorem \ref{thm} and extend the second one to prove Theorem \ref{thm.} and \ref{ps}. The two proofs  can be read independent of each other.  Let us  state the main ideas and new ingredients  of the two proofs respectively.  
		\subsubsection{Explicit matrix calculations and Fourier coefficient estimation}\label{sec:fourstepst} The first proof follows  a by-now standard four-step strategy for quantum ergodicity, where we handle the main difficulty of quantum variance via explicit matrix calculations and Fourier coefficient estimation.  
	
	The proof of quantum ergodicity  has been  largely developed  to a four-step strategy, which is discussed in Anantharaman's monograph \cite[Section 2.5 and 6.2]{Ana22}. In the framework of graphs, it  can be summarized as: 
	\begin{itemize}[leftmargin=2em]
		\item[\tiny$\bullet$]\textit{First step:} Introduce an appropriate ``quantum variance'' $\operatorname{Var}(a_N)$ for the  observable $a_N$ so that the question of quantum ergodicity turns into the estimation of the limit of quantum variance.
		\item[\tiny$\bullet$]\textit{Second step:}  Use the eigenfunction property to establish  invariance  of  $\operatorname{Var}(a_N)$  under a certain ``quantum dynamics''. This transforms it  into a better  form $\operatorname{Var}({a'_N})$, which is  much easier to handle.
		\item[\tiny$\bullet$]\textit{Third step:} Find an upper bound on $\operatorname{Var}({a'_N})$ in terms of the $\ell^2$-norm  of ${a'_N}$ in some appropriate Hilbert space $\mathcal H$.
		\item[\tiny$\bullet$]\textit{Forth step:} Use  the delocalization/ergodicity structures on the graphs to  obtain the  bound $\|{a'_N}\|_{\mathcal{H}}= o(\|a_N\|_\infty)_{N\to\infty}$, thus one has  $\operatorname{Var}(a_N)\to0$ as $N\to\infty$, which finishes the proof of quantum ergodicity.
	\end{itemize}
	
When applied to specific graph models, each of the four steps requires unique analysis. Generally speaking,  Steps $1$--$2$  need observational examination of graph structures, whereas  Steps $3$--$4$ require more technical analysis.
	
	In the current framework,  we consider the standard  quantum variance in Step $1$:	$$\operatorname{Var} (a_N):= \frac{1}{N^d} \sum_{j=1}^{N^d}\left|\langle \psi_N^{(j)}, a_N \psi_N^{(j)}\rangle\right|^2.$$

For such an appropriate quantum dynamics in Step 2, we draw motivation from  \cite{KK05,MS23}, where  \cite{KK05} studied the ergodic average of  Laplacian on torus, and \cite{MS23} extended this framework to periodic graphs.
 We find that this  ergodic average argument also applies to  the Dirichlet conditions. 
  Specifically, we will  analyze the quantum dynamics given by the   limit time-averaged dynamics of the observable:
\begin{equation}\label{..}
	\lim_{T\to \infty}\frac{1}{T} \int_0^T e^{-{\rm i}t\A_{\Gamma_N}}a_Ne^{{\rm i}t\A_{\Gamma_N}}dt.
\end{equation}

 In Step $3$, by  a simple computation (c.f., \eqref{4}),  we show that the  quantum variance can be bounded via the Hilbert-Schmidt norm of the matrix.  
 
 Now we arrive at Step 4.  In the periodic conditions case,  \cite{MS23} introduced  the  Floquet transform  and  ``quantization'' argument for the proof.  
  However, since the Floquet transform requires strictly periodic conditions, it could not be extended  to the  Dirichlet conditions. Instead, we consider a Dirichlet-adapted sine basis (c.f., \eqref{sb}) for a decomposition of  $\A_{\Gamma_N}$. Through this specialized basis, we perform explicit matrix calculations to derive the  explicit expression of \eqref{..}, which is denoted by  $a_N^\infty$,  formulated via the  discrete Fourier coefficients of $a_N$ (c.f., \eqref{12} and \eqref{fj}). Now the issue becomes the estimation of  these Fourier coefficients. Through a further analysis concerning the  multiplicity of  eigenvalues of $\A_{\Gamma_N}$ (c.f., Lemma \ref{c1}), we prove  that the  non-zero frequency terms are bounded by $N^{d-1}$.
  Here, Lemma \ref{c1} can be interpreted as the Dirichlet conditions counterpart to the Floquet eigenvalue assumption (originally introduced for periodic conditions) in \cite[equation (1.3)]{MS23}.
     By partitioning  Fourier coefficients into orthogonal classes and applying Bessel's inequality to each class, we establish $ \|a_N^\infty-\langle a_N\rangle\operatorname{Id}_{\Gamma_N}\|^2_{\rm HS}=O(N^{d-1})$, thereby proving the convergence of quantum variance to zero. 

	We believe that explicit matrix calculations provide an intuitive framework for addressing the technical difficulties inherent in sine basis representations. Moreover, this approach also applies to the periodic conditions case, provided an explicit basis is known.

	\subsubsection{Eigenfunction correspondence}  
	The second proof introduce an elegant eigenfunction correspondence to relate the Dirichlet conditions problems to established results in the periodic conditions case. While non-self-contained, this approach offers greater conciseness.  
	
	We note that the eigenvalues of the Dirichlet condition operator $\mathcal{A}_{\Gamma_N}$, given by $\sum_{l=1}^d 2\cos\frac{k_l\pi}{N+1}$ with $\bk \in \Gamma_N$, form a subset of the eigenvalues of the periodic condition operator $\mathfrak{A}_{\Gamma_{2N+2}}$ on an extended lattice cube $\Gamma_{2N+2}$, whose eigenvalues are $\sum_{l=1}^d 2\cos\frac{k_l\pi}{N+1}$ with $\bk \in [[0,N]]^d$. Motivated by this observation,  through zero-extension and symmetric reflection of eigenfunctions, we establish an injective map $\he$ from $\mathcal{A}_{\Gamma_N}$-eigenfunctions to $\mathfrak{A}_{\Gamma_{2N+2}}$-eigenfunctions (c.f., Proposition \ref{eb}). This eigenfunction correspondence enables the transfer of eigenfunction properties  (particularly quantum ergodicity results) from the known periodic conditions case to the Dirichlet one.  
	
	Generally, periodic conditions operators permit more tractable spectral analysis through Bloch-Floquet theory  compared to Dirichlet conditions operators. The eigenfunction correspondence establishes a connection between the two operator classes, suggesting further applications to eigenvalue/eigenvector problems besides quantum ergodicity. Notably, the framework admits natural extension to finite range observables and  periodic Schr\"odinger operators with periods of length at most two.  
	
	\vspace{0.1cm}
	
	\textbf{Organization of the paper:}  We give the first proof of  Theorem \ref{thm} (proof via the four-step strategy) in Section \ref{pf1}. Then we state the established periodic conditions results for use in the second proof  in Section \ref{est} and introduce the eigenfunction correspondence  in Section \ref{map}.   The second proof of Theorem \ref{thm} (proof using eigenfunction correspondence) is given in Section \ref{proof1}.  Finally, we  extend the eigenfunction correspondence to finite range observables in Section \ref{pf21} and to  periodic Schr\"odinger operators in  \ref{pf22}. Further discussions for eigenfunction correlators and further questions are included in Section \ref{fur}.

	\section{Quantum ergodicity for the adjacency matrix on $\Z^d$  with Dirichlet conditions}\label{pf1}
    This section is devoted to the proof of quantum ergodicity for the adjacency matrix on $\Z^d$  with Dirichlet boundary conditions, as stated in  Theorem \ref{thm}. As illustrated in Section \ref{sec:fourstepst}, we follow the four-step strategy, with a particular emphasis on explicit matrix calculations and Fourier coefficient estimation. An alternative proof of the theorem is also presented in Section~ \ref{pf2}. 
	\begin{flushleft}
		\textbf{Step 1:} Introduce the quantum variance.
	\end{flushleft}   
	
	For an observable $T_N: \Gamma_N\times \Gamma_N\to \C$, given an  orthonormal eigenfunction  basis $\{\psi_N^{(j)}\}_{j=1}^{N^d}$ of $\A_{\Gamma_N}$,   we define the quantum variance for $T_N$:
	$$\operatorname{Var} (T_N):= \frac{1}{N^d} \sum_{j=1}^{N^d}\left|\langle \psi_N^{(j)}, T_N \psi_N^{(j)}\rangle\right|^2.$$
	Then \eqref{1} is equivalent to 
	\begin{equation}\label{2}
		\lim _{N \rightarrow \infty}  \operatorname{Var}(a_N-\langle a_N\rangle\operatorname{Id}_{\Gamma_N})=0.
	\end{equation}	
	\begin{flushleft}
		\textbf{Step 2:} Use the eigenfunction property to note that the quantum variance is invariant under ``time-averaged dynamics''. 
	\end{flushleft}
	
	Consider  the   time-averaged dynamics  for $T>0$
	$$a^T_N:=\frac{1}{T} \int_0^T e^{-{\rm i}t\A_{\Gamma_N}}a_Ne^{{\rm i}t\A_{\Gamma_N}}dt .$$ 
	We denote $a^{\infty}_N=\lim_{T\to \infty}a^T_N$. (We will show that the limit always exists.)
	
	Denote by $\lambda_N^{(j)}$  the corresponding  eigenvalue of $\psi_N^{(j)}$. Since for any $t>0$, 
	\begin{align*}
		\langle \psi_N^{(j)}, a_N \psi_N^{(j)}\rangle&=\langle e^{{\rm i}t\lambda_N^{(j)}} \psi_N^{(j)}, a_N e^{{\rm i}t\lambda_N^{(j)}} \psi_N^{(j)}\rangle \\
		&= \langle e^{{\rm i}t\A_{\Gamma_N}} \psi_N^{(j)}, a_N e^{{\rm i}t\A_{\Gamma_N}} \psi_N^{(j)}\rangle \\
		&=\langle  \psi_N^{(j)}, e^{-{\rm i}t\A_{\Gamma_N}} a_N e^{{\rm i}t\A_{\Gamma_N}} \psi_N^{(j)}\rangle, 
	\end{align*}
	averaging the above equation for  $t$ on $[0,T]$   and letting $T\to \infty$  yield 
	$$
	\langle \psi_N^{(j)}, a_N \psi_N^{(j)}\rangle= \langle \psi_N^{(j)}, a_N^T \psi_N^{(j)}\rangle=
	\langle \psi_N^{(j)}, a_N^{\infty} \psi_N^{(j)}\rangle.$$
It follows from the above equation that   \begin{equation}\label{3}
		\operatorname{Var}(a_N)=\operatorname{Var}(a_N^{\infty}).
	\end{equation}
	\begin{flushleft}
		\textbf{Step 3:} Bound the quantum variance by the Hilbert-Schmidt norm.
	\end{flushleft}	
	
	Since $\{\psi_N^{(j)}\}_{j=1}^{N^d}$ is an orthonormal basis,  for any observable $T_N$, we have 
	\begin{align}
		\nonumber	\operatorname{Var} (T_N)&\leq\frac{1}{N^d}  \sum_{j=1}^{N^d}   \|\psi_N^{(j)}\|^2\|T_N \psi_N^{(j)}\|^2\\
		\nonumber	&\leq \frac{1}{N^d}\sum_{j=1}^{N^d} \langle T_N \psi_N^{(j)}, T_N \psi_N^{(j)}\rangle\\
		\nonumber	&=\frac{1}{N^d}\operatorname{Trace} (T_N^*T_N)\\
		&=\frac{1}{N^d}\|T_N\|_{\rm HS}^2, \label{4}
	\end{align}
	where $\|T_N\|_{\rm HS}^2=\sum_{\x,\y\in \Gamma_N}|T_N(\x,\y)|^2.$ 
	Combing \eqref{3} and \eqref{4}, in order to prove \eqref{2}, we only need to prove 
	\begin{equation}\label{=}
		\lim _{N \rightarrow \infty} \frac{1}{N^d} \|a_N^\infty-\langle a_N\rangle\operatorname{Id}_{\Gamma_N}\|^2_{\rm HS}=0.
	\end{equation}
	\begin{flushleft}
		\textbf{Step 4:} Bound  $\frac{1}{N^d} \|a_N^\infty-\langle a_N\rangle\operatorname{Id}_{\Gamma_N}\|^2_{\rm HS}$ in terms of $\|a_N\|_\infty$. 
			\end{flushleft}
\vspace{0.1cm} 

\begin{flushleft}
	\textbf{4.1.} Select a sine basis $\Si_N$ to  decompose $\A_{\Gamma_N}$ as $\Si_N\Lambda_N\Si_N^*$.
\end{flushleft}
  Recall the one-dimensional adjacency matrix  $\Delta_N$,
  $$	\Delta_N(x,y):=\begin{cases}
      1,  \ \ & |x-y|=1, \  x,y\in [[1,N]],\\
      0, \ \  &\text{otherwise}.
  \end{cases} $$
	A direct computation shows that $\Delta_N$ has $N$ simple eigenvalues $\lambda_N^{(k)}=2\cos\frac{k\pi}{N+1}$ corresponding to orthonormal eigenfunctions 
\begin{equation}\label{1716}
    s_N^{(k)}(x)=\sqrt{\frac{2}{N+1}}\sin\frac{k\pi x}{N+1} \textrm{ for  } 1\leq k\leq N.
\end{equation}
	For $\bk\in \Gamma_N$, define 
	\begin{equation}\label{sb}
		s_N^{(\bk)}(\x)=\mathop{\otimes}\limits_{l=1}^d s_N^{(k_l)}(x_l):= \left(\frac{2}{N+1}\right)^{d/2}\prod_{l=1}^d\sin\frac{k_l\pi x_l}{N+1}.
	\end{equation}
Since $\A_{\Gamma_N}=\sum_{l=1}^d\Delta_{N,l}$, where 
  $$	\Delta_{N,l}(\x,\y):=\begin{cases}
      1, \ \ & \|\x-\y\|_1=|x_l-y_l|=1, \  \x,\y\in \Gamma_N,\\
      0, \ \ &\text{otherwise}.
  \end{cases} $$
It follows that  $s_N^{(\bk)}$ ($\bk\in \Gamma_N$) is an  orthonormal basis of $\ell^2(\Gamma_N)$ satisfying 
	\begin{equation}\label{tez}
		\A_{\Gamma_N}s_N^{(\bk)}=\sum_{l=1}^d\Delta_{N,l}\mathop{\otimes}\limits_{l=1}^d s_N^{(k_l)}=\lambda_N^{(\bk)}s_N^{(\bk)} \textrm{ with } \lambda_N^{(\bk)}=\sum_{l=1}^d\lambda_N^{(k_l)}.
	\end{equation}
	Define the $(\Gamma_N\times\Gamma_N)$-matrices $\Si_N$ and  $\Lambda_N$
	$$\Si_N(\m,\bk):=s_N^{(\bk)}(\m), \ \ \Lambda_N:=\operatorname{diag}(\lambda_N^{(\bk)}). $$
	Then $\Si_N$ is unitary and  it follows from \eqref{tez} that 
	$$\A_{\Gamma_N}=\Si_N\Lambda_N\Si_N^*.$$
\begin{flushleft}
	\textbf{4.2.} Obtain the  explicit expression of $a_N^\infty$ in terms of the discrete  Fourier coefficients of $a_N$ by this decomposition.
\end{flushleft}
	
	Since  $\A_{\Gamma_N}=\Si_N\Lambda_N\Si_N^*$, we have  
	\begin{equation}\label{1716}
		e^{-{\rm i}t\A_{\Gamma_N}}a_Ne^{{\rm i}t\A_{\Gamma_N}}=\Si_Ne^{-{\rm i}t\Lambda_N}\Si_N^*a_N\Si_Ne^{{\rm i}t\Lambda_N}\Si_N^*.
	\end{equation}
Denote  the center term  $\Ce_N:=\Si_N^*a_N\Si_N$. By definition, 
	\begin{align}
		\nonumber	\Ce_N(\bk,\m)
		\nonumber=	& \sum_{\x\in\Gamma_N}s_N^{(\bk)}(\x)a_N(\x,\x)s_N^{(\m)}(\x)\\
		=&\left(\frac{2}{N+1}\right)^{d} \sum_{\x\in\Gamma_N} a_N(\x,\x)\prod_{l=1}^d\frac{\left( e^{\frac{k_l\pi x_li}{N+1}}-e^{-\frac{k_l\pi x_li}{N+1}}\right)\left( e^{\frac{m_l\pi x_li}{N+1}}-e^{-\frac{m_l\pi x_li}{N+1}}\right)}{(2i)^2}.\label{1444}
	\end{align}
	We introduce the following notation:
	For $\BT,\BV\in \R^d$, $\x\in \Gamma_N$, denote 
	\begin{align*}
		\BT\odot\BV&:=(\theta_l\varepsilon_l)_{l=1}^d,\\ 
		e^{(\BT)}(\x)&:=e^{\langle\BT,\x\rangle\pi {\rm i}},\\
		e^{(\BT)}\bullet a_N&:=\sum_{\x \in \Gamma_N}	e^{(-\BT)}(\x)a_N(\x,\x).
	\end{align*}
	Then  \eqref{1444} can be written as 
	\begin{equation}\label{1912}
		\left(\frac{1}{2(N+1)}\right)^{d}\sum_{\BV,\BV'\in \{1,-1\}^d} \sign(-\BV\odot\BV')	 e^{(\frac{\bk\odot\BV}{N+1}+\frac{\m\odot\BV'}{N+1})}\bullet a_N,
	\end{equation}
	where $\sign(\BV)=\prod_{l=1}^d\varepsilon_l$ for $\BV\in \{1,-1\}^d$. 
	It follows from \eqref{1716} that 
	\begin{align}
		\nonumber	a_N^\infty&=\lim_{T\to \infty} \frac{1}{T} \int_0^T e^{-{\rm i}t\A_{\Gamma_N}}a_Ne^{{\rm i}t\A_{\Gamma_N}}dt\\
		\nonumber	&=\Si_N \left(\lim_{T\to \infty} \frac{1}{T} \int_0^T e^{-{\rm i}t\Lambda_N}\Ce_Ne^{{\rm i}t\Lambda_N}dt\right) \Si_N^*\\
		\nonumber	&=\Si_N \left(\lim_{T\to \infty} \frac{1}{T} \int_0^T \Ce_N(\bk,\m) e^{{\rm i}t( \lambda_N^{(\m)}-  \lambda_N^{(\bk)} )}dt\right) \Si_N^*\\
		&=\Si_N \left(\Ce_N(\bk,\m) \delta_{\lambda_N^{(\bk)},\lambda_N^{(\m)}} \right) \Si_N^*, \label{1412}
	\end{align}
	where we use $$\lim_{T\to \infty} \frac{1}{T} \int_0^T e^{{\rm i}t\lambda}dt=\begin{cases}
		1, \ \ \lambda=0, \\
		0, \ \ \lambda\in \R\setminus\{0\}, 
	\end{cases}$$
	in the last line of the equation.
    \vspace{2mm}
	
		 Observe from \eqref{1912} that each  element of $\Ce_N$ is a linear combination of the discrete Fourier coefficients of $a_N$:   $e^{(\BT)}\bullet a_N$ with frequency  $\BT\in \frac{1}{N+1} [[-2N,2N]]^d$.
		 Thus 
	we can decompose \begin{equation}\label{12}
		\Ce_N(\bk,\m) \delta_{\lambda_N^{(\bk)},\lambda_N^{(\m)}}=\sum_{\BT\in \frac{1}{N+1} [[-2N,2N]]^d}\Ce_{N,\BT}
	\end{equation}
	with 
	\begin{equation}\label{fj}
		\Ce_{N,\BT}(\bk,\m)= 
		\delta_{\lambda_N^{(\bk)},\lambda_N^{(\m)}} \sum_{\substack{\BV,\BV'\in \{1,-1\}^d:\\
				\frac{\bk\odot\BV}{N+1}+\frac{\m\odot\BV'}{N+1}=\BT}}\frac{\sign(-\BV\odot\BV')e^{(\BT)}\bullet a_N}{\left(2(N+1)\right)^{d}}. 
	\end{equation}

\vspace{0.1cm} 
\begin{flushleft}
	\textbf{4.3.} Bound $\Ce_{N,\BT}(\bk,\m)$ through eigenvalue  analysis. 
\end{flushleft}
	\begin{itemize}[leftmargin=2em]
		\item Case $\BT={\bm 0}$: By \eqref{fj}, for $\Ce_{N,\bm 0}(\bk,\m) $  not vanishing, we have  $$\frac{\bk\odot\BV}{N+1}+\frac{\m\odot\BV'}{N+1}={\bm 0}$$ for some   $\BV,\BV'\in \{1,-1\}^d$.
	
		Since $\bk,\m\in \Gamma_N$, the above equation yields 
		$$\bk=\m, \ \ \BV=-\BV' .$$
	For any fixed $\bk= \m$,  there are $2^d$ pairs of  $\BV,\BV' \in \{1,-1\}^d$ such that  $\BV=-\BV'$, for  which  cases $\sign(-\BV\odot\BV')=1$.  	Thus \begin{align}\label{1214}
			\Ce_{N,\bm 0}=\operatorname{diag}\left(\frac{2^de^{({\bm 0})} \bullet a_N}{\left(2(N+1)\right)^{d}}\right)=\left(\frac{N}{N+1}\right)^d\langle a_N\rangle \operatorname{Id}_{\Gamma_N}
		\end{align}
		\item Case $ \BT\in \frac{1}{N+1} [[-2N,2N]]^d\setminus \{{\bm 0}\}$: By \eqref{fj}, for $\Ce_{N,\BT}(\bk,\m) $  not vanishing, we have  
		\begin{align}
			\lambda_N^{(\bk)}&=\lambda_N^{(\m)},\label{te} \\  \frac{\bk\odot\BV}{N+1}+\frac{\m\odot\BV'}{N+1}&=\BT\label{xia}
		\end{align}
		for some   $\BV,\BV'\in \{1,-1\}^d$.  
		
	We need  the following lemma on the  upper bound for the number of  $(\bk,\m)$ satisfying \eqref{te} and \eqref{xia}.
		\begin{lem}\label{c1}
			For any fixed    $\BV,\BV'\in \{1,-1\}^d$, there are at most $2N^{d-1}$  pairs of  $(\bk,\m)$ satisfying \eqref{te} and \eqref{xia}.
		\end{lem}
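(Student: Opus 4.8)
The plan is to turn the two constraints \eqref{te}--\eqref{xia} into a single trigonometric equation in one variable and then count. First I would observe that, with $\BV,\BV'$ and the frequency $\BT\ne{\bm 0}$ all fixed, condition \eqref{xia} determines $\m$ from $\bk$: writing $\BT=\frac1{N+1}(j_1,\dots,j_d)$, integrality of the left-hand side of \eqref{xia} forces each $j_l\in\Z$ with $|j_l|\le 2N$, and \eqref{xia} reads coordinatewise $\varepsilon_l k_l+\varepsilon'_l m_l=j_l$, i.e.\ $m_l=\varepsilon'_l(j_l-\varepsilon_l k_l)$. Hence it suffices to bound the number of $\bk\in\Gamma_N$ for which \eqref{te} holds with this $\m$ (dropping the requirement $\m\in\Gamma_N$ only increases the count).

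Next I would substitute $\frac{m_l\pi}{N+1}=\varepsilon'_l\big(\frac{j_l\pi}{N+1}-\varepsilon_l\frac{k_l\pi}{N+1}\big)$ into \eqref{te}. Setting $\alpha_l:=\frac{k_l\pi}{N+1}$ and $\beta_l:=\frac{j_l\pi}{N+1}$ and using that cosine is even, \eqref{te} becomes
\begin{equation*}
	\sum_{l=1}^d f_l(\alpha_l)=0,\qquad f_l(\alpha):=\cos\alpha-\cos(\beta_l-\varepsilon_l\alpha)=(1-\cos\beta_l)\cos\alpha-\varepsilon_l\sin\beta_l\,\sin\alpha .
\end{equation*}
Since $\BT\ne{\bm 0}$ there is an index $l_0$ with $j_{l_0}\ne0$; then $\beta_{l_0}\in(-2\pi,2\pi)\setminus\{0\}$, so $1-\cos\beta_{l_0}>0$ and $f_{l_0}(\alpha)=r_{l_0}\cos(\alpha-\delta_{l_0})$ is a non-constant sinusoid (amplitude $r_{l_0}=2|\sin\frac{\beta_{l_0}}{2}|>0$).

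Finally I would fix the $d-1$ coordinates $(k_l)_{l\ne l_0}$ — at most $N^{d-1}$ choices — which turns \eqref{te} into $\cos(\alpha_{l_0}-\delta_{l_0})=C$ for a constant $C$ depending only on the fixed coordinates. As $k_{l_0}$ runs through $[[1,N]]$, the quantity $\alpha_{l_0}-\delta_{l_0}$ runs through $N$ distinct points lying in an interval of length $\frac{(N-1)\pi}{N+1}<\pi$; since $\cos$ attains any value at most twice on an interval of length $\le\pi$, at most two of these $k_{l_0}$ are admissible. Multiplying, at most $2N^{d-1}$ values of $\bk$, hence at most $2N^{d-1}$ pairs $(\bk,\m)$, survive.

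The main thing to get right is the reduction to one variable, namely checking that $f_{l_0}$ is genuinely non-constant; this is exactly where $\BT\ne{\bm 0}$ enters, through $j_{l_0}\ne0$ together with the bound $|j_{l_0}|\le 2N<2(N+1)$, which keeps $\beta_{l_0}$ away from $2\pi\Z$ so that $\cos\beta_{l_0}\ne1$. The remaining ingredients — that $\m$ is a function of $\bk$, and that a cosine is at most two-to-one on a short interval — are elementary.
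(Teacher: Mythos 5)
Your proof is correct and follows essentially the same route as the paper's: both use \eqref{xia} to express one of $\bk,\m$ as a function of the other, fix $d-1$ of the free coordinates (at most $N^{d-1}$ choices), and reduce \eqref{te} to a single sinusoidal equation whose amplitude $2|\sin\frac{\beta_{l_0}}{2}|$ is nonzero precisely because $0<|\theta_{l_0}|<2$, yielding at most two solutions on an interval of length less than $\pi$. The only (immaterial) differences are the direction of the substitution and that the paper packages the key cancellation via the sum-to-product identity rather than the amplitude--phase form.
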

		\begin{proof}[Proof of Lemma \ref{c1}]
			Fix a pair of $\BV,\BV'$.  Then through \eqref{xia}, $\bk$ can be determined by $\m$:   \begin{equation}\label{fenl}
				\frac{k_l}{N+1}=-\frac{\varepsilon_{l}\varepsilon_{l}'m_l}{N+1}+\varepsilon_{l}\theta_l, \ \ 1\leq l\leq d.
			\end{equation}
			Without loss of generality, 	suppose $\theta_1\neq 0$. Then for any fixed $(m_2,\cdots, m_d)$ ($N^{d-1}$  choices), it follows from \eqref{te} that  
			$$\cos\frac{k_1\pi}{N+1}-\cos\frac{m_1\pi}{N+1}=\sum_{l=2 }^d\left(\cos\frac{m_l\pi}{N+1}-\cos\frac{k_l\pi}{N+1}\right).$$
			Substituting \eqref{fenl} to the above equation yields 
			\begin{equation}\label{jix}
				\cos\left(\frac{m_1\pi}{N+1}-\varepsilon_{1}'\theta_1\pi\right)-\cos\frac{m_1\pi}{N+1}=\star,
			\end{equation}
			where $\star$ is some fixed number after $\BT,\BV,\BV',(m_2,\cdots, m_d)$ fixed. It follows from \eqref{jix} that 
			$$2	\sin\left(\frac{m_1\pi}{N+1}-\frac{\varepsilon_{1}'\theta_1\pi}{2}\right)\cdot\sin\frac{\varepsilon_{1}'\theta_1\pi}{2}=\star.$$
			Since $0\neq |\theta_1|<2$, we have $\sin\frac{\varepsilon_{1}'\theta_1\pi}{2}\neq 0$. Thus there are at most two solutions of $m_1\in [[1,N]]$ for the above equation.
		\end{proof}
		Since there are at most $4^d$  choices  of  $\BV,\BV'\in \{1,-1\}^d$, it follows from  \eqref{fj} and Lemma \ref{c1} that the matrix  $\Ce_{N,\BT}$ has nonzero elements on at most $2\cdot4^dN^{d-1}$ sites,  and each element $\Ce_{N,\BT}(\bk,\m)$ is  bounded by $\left(\frac{2}{N+1}\right)^d|e^{(\BT)}\bullet a_N|$. 
	\end{itemize}
    
	By \eqref{1912},  each  element of  $\Ce_N$ is a combination of $e^{(\BT)}\bullet a_N$ for at most $4^d$ many $\BT$'s. Recall \eqref{1412}, \eqref{12}, \eqref{1214} and the above estimate of  $\Ce_{N,\BT}$ ($\BT\neq {\bm 0}$).  Applying Cauchy inequality to each element yields 
	\begin{align}\nonumber&\|a_N^\infty-\langle a_N\rangle\operatorname{Id}_{\Gamma_N}\|^2_{\rm HS}\\
		=&\|\Ce_N(\bk,\m)\nonumber \delta_{\lambda_N^{(\bk)},\lambda_N^{(\m)}}-\langle a_N\rangle\operatorname{Id}_{\Gamma_N}\|^2_{\rm HS}\\
		\leq &2\|\sum_{\BT\neq {\bm 0}}\Ce_{N,\BT}\|^2_{\rm HS}+2\|\Ce_{N,{\bm 0}}-\langle a_N\rangle\operatorname{Id}_{\Gamma_N}\|^2_{\rm HS}\nonumber\\
		\leq& 2\cdot 4^d\sum_{\BT\neq {\bm 0}}\|\Ce_{N,\BT}\|^2_{\rm HS}+2\left(1-\left(\frac{N}{N+1}\right)^d\right)^2|\langle a_N\rangle|^2 N^d\nonumber\\
		\leq & \left(2\cdot 4^d\right)^2\cdot N^{d-1}\left(\frac{2}{N+1}\right)^{2d}\sum_{\BT\neq {\bm 0}}|e^{(\BT)}\bullet a_N|^2+ d^2N^{d-2}\|a_N\|_\infty^2. \label{51}
	\end{align}
	Thus it remains to estimate the summation \begin{equation*}
		\left(\frac{2}{N+1}\right)^{2d}\sum_{\BT\in \frac{1}{N+1} [[-2N,2N]]^d}|e^{(\BT)}\bullet a_N|^2.
	\end{equation*}
	Define  $\widetilde{e^{(\BT)}},\widetilde{a_N}\in \ell^2([[0,N]]^{d})$ as 
	\begin{align*}
		\widetilde{e^{(\BT)}}(\x)&:=\left(\frac{1}{N+1}\right)^{\frac{d}{2}}e^{\langle\BT,\x\rangle\pi {\rm i}},\ \quad \ \  \x\in  [[0,N]]^{d}\\
		\widetilde{a_N}(\x)&:=\begin{cases}
			\left(\frac{1}{N+1}\right)^{\frac{d}{2}}a_N(\x,\x), \ \ &\x\in\Gamma_N, \\
			0, \ \ &\x\in [[0,N]]^{d}\setminus\Gamma_N.
		\end{cases}
	\end{align*}
	Then \begin{equation}\label{1224}
		\left(\frac{1}{N+1}\right)^{2d}|e^{(\BT)}\bullet a_N|^2=\left|\langle\widetilde{e^{(\BT)}},\widetilde{a_N}\rangle\right|^2.
	\end{equation}
\vspace{0.1cm} 

\begin{flushleft}
		\textbf{4.4.}  Partition  Fourier coefficients into orthogonal classes, and obtain bounds in each class. 
\end{flushleft}
	 
	Although $\widetilde{e^{(\BT)}}$  are not mutually orthogonal,  we demonstrate that they can be classified into $4^d$ classes  within each of which orthogonality is preserved.
	First, classify the set  $\frac{1}{N+1} [[-2N,2N]]^d$ into $2^d$ subsets of the form $\prod_{l=1}^d I_l$, where each $I_l$ is either  $\frac{1}{N+1} [[-2N,0]]$  or $\frac{1}{N+1} [[0,2N]]$. For any two $\BT,\BT'$ within the  same subset,  the coordinate-wise difference satisfies $|\theta'_l-\theta_l|<2$ for any $l$. Next,  define the following   equivalence relation on each subset:
	$$\BT\sim\BT'\Longleftrightarrow 2\mid(N+1)(\theta'_l-\theta_l),\ \ \forall l. $$ This equivalence relation further partitions  each subset into $2^d$ equivalence classes.
	Orthogonality within each equivalence class follows from the inner product expression:
	$$	\langle\widetilde{e^{(\BT)}},\widetilde{e^{(\BT')}}\rangle=\left(\frac{1}{N+1}\right)^{d}\prod_{l=1}^d\left(\sum_{x_l=0}^Ne^{(\theta'_l-\theta_l)x_l\pi {\rm i}}\right).$$  
	When $\BT\sim \BT'$, $\BT\neq\BT'$ (say, $\theta_l\neq\theta'_l$),  the summation in the  $l$-coordinate simplifies to zero due to the imposed equivalence condition, ensuring orthogonality.
	Thus by Bessel inequality, we have 
	\begin{equation}\label{1227}
		\sum_{\BT\in \frac{1}{N+1} [[-2N,2N]]^d}\left |\langle\widetilde{e^{(\BT)}},\widetilde{a_N}\rangle\right |^2\leq 4^d\| \widetilde{a_N}\|_{\ell^2}^2\leq 4^d\|a_N\|_\infty^2.
	\end{equation}
	It  follows from \eqref{51}, \eqref{1224} and \eqref{1227} that 
	\begin{equation*}
		\limsup_{N \rightarrow \infty} \frac{1}{N^d} \|a_N^\infty-\langle a_N\rangle\operatorname{Id}_{\Gamma_N}\|^2_{\rm HS}\leq 	\lim_{N \rightarrow \infty} \frac{1}{N}C_d\|a_N\|_\infty^2=0, 
	\end{equation*}
This 	verifies \eqref{=}. Thus we finish the proof of Theorem \ref{thm}.

	\section{ Eigenfunction correspondence}\label{pf2}
In this section, we establish the eigenfunction correspondence to relate Dirichlet  conditions problems to known results in the periodic conditions  case, thereby proving 
Theorems \ref{thm.}, and \ref{ps}. In particular, this provides an alternative proof of Theorem  \ref{thm}.

 \subsection{Established results in the periodic conditions case}\label{est} Let us first state the established   quantum ergodicity results    for $\mathfrak{A}_{\Gamma_N}$, the adjacency matrix with periodic condition, which is defined as:
\begin{defn}\label{def}
	For $\psi\in \ell^2(\Gamma_{N})$, denote by ${\psi}^{\rm P}$ its  periodic extension  concerning $\Gamma_N$ to $\Z^d$ such that
\begin{align*}
	{\psi}^{\rm P}(\x)	:&={\psi}(\x), \ \ \x \in \Gamma_N	\\
	{\psi}^{\rm P}(\x+N\bm e_l):&={\psi}^{\rm P}(\x), \ \ \x\in\Z^d, \  1\leq l\leq d.
\end{align*} 

Define  $\mathfrak{A}_{\Gamma_N}$ as    
\begin{equation}\label{pero}
	(\mathfrak{A}_{\Gamma_N}\psi)(\x):=\sum_{\substack{\y\in \Z^d \\ \|\y-\x\|_1=1 }}{\psi}^{\rm P}(\y), \ \ \psi\in \ell^2(\Gamma_N),\   \x\in \Gamma_N.  
\end{equation}
\end{defn}
The following  quantum ergodicity results for $\mathfrak{A}_{\Gamma_{N}}$ was proved in \cite{MS23}:
	\begin{thm}[\cite{MS23}, diagonal observables]\label{tmm}
	 The statement of Theorem \ref{thm} holds 	with  $\A_{\Gamma_N}$ replaced by $\mathfrak{A}_{\Gamma_N}$.
	\end{thm}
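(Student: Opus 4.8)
The plan is to run the same four-step strategy as in Section~\ref{pf1}, but with the Fourier (Bloch) basis in place of the Dirichlet-adapted sine basis, which makes matters cleaner because exponentials behave well under the periodic condition. For $\bk\in\Gamma_N$ I would set $e_N^{(\bk)}(\x):=N^{-d/2}e^{2\pi{\rm i}\langle\bk,\x\rangle/N}$; since the periodic extension of $e_N^{(\bk)}$ is again the same exponential on all of $\Z^d$, these form an orthonormal eigenbasis of $\mathfrak{A}_{\Gamma_N}$ with eigenvalues $\mu_N^{(\bk)}=\sum_{l=1}^d 2\cos\frac{2\pi k_l}{N}$. Steps~1--3 are then verbatim as in Section~\ref{pf1}: introduce $\operatorname{Var}(T_N)$, use the eigenfunction identity to reduce to $\operatorname{Var}(a_N)=\operatorname{Var}(a_N^\infty)$ where $a_N^\infty:=\lim_{T\to\infty}\frac1T\int_0^T e^{-{\rm i}t\mathfrak{A}_{\Gamma_N}}a_N e^{{\rm i}t\mathfrak{A}_{\Gamma_N}}\,dt$, and bound the variance by $N^{-d}\|a_N^\infty-\langle a_N\rangle\operatorname{Id}_{\Gamma_N}\|_{\rm HS}^2$.

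For Step~4 I would diagonalize $\mathfrak{A}_{\Gamma_N}=F_N M_N F_N^*$ with $F_N(\m,\bk):=e_N^{(\bk)}(\m)$ unitary and $M_N:=\operatorname{diag}(\mu_N^{(\bk)})$, so that exactly as in \eqref{1412} one gets $a_N^\infty=F_N\big(\mathcal{C}_N(\bk,\m)\,\delta_{\mu_N^{(\bk)},\mu_N^{(\m)}}\big)F_N^*$ with $\mathcal{C}_N:=F_N^*a_N F_N$. Since $a_N$ is diagonal, $\mathcal{C}_N(\bk,\m)=\widehat{a_N}(\bk-\m)$ depends only on $\bk-\m$ modulo $N$, where $\widehat{a_N}(\bm j):=N^{-d}\sum_{\x\in\Gamma_N}a_N(\x,\x)e^{-2\pi{\rm i}\langle\bm j,\x\rangle/N}$ is $N$-periodic; in particular $\mathcal{C}_N(\bk,\bk)=\widehat{a_N}({\bm 0})=\langle a_N\rangle$, so the $\bk=\m$ part of $a_N^\infty$ is precisely $\langle a_N\rangle\operatorname{Id}_{\Gamma_N}$. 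Here the periodic case is genuinely simpler than the Dirichlet one: each entry of $\mathcal{C}_N$ is a single Fourier coefficient rather than the combination of $4^d$ terms appearing in \eqref{1912}, and the $e_N^{(\bk)}$ are globally orthonormal, so no partition into orthogonality classes is needed. Using unitary invariance of the Hilbert--Schmidt norm I would then obtain
\[
\|a_N^\infty-\langle a_N\rangle\operatorname{Id}_{\Gamma_N}\|_{\rm HS}^2=\sum_{\substack{\bk,\m\in\Gamma_N,\ \bk\neq\m\\ \mu_N^{(\bk)}=\mu_N^{(\m)}}}|\widehat{a_N}(\bk-\m)|^2=\sum_{\bm j\not\equiv{\bm 0}}|\widehat{a_N}(\bm j)|^2\cdot\#\{\bk\in\Gamma_N:\ \mu_N^{(\bk)}=\mu_N^{(\bk-\bm j)}\}.
\]

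The one nonroutine ingredient, and what I expect to be the main obstacle, is the multiplicity count: for each fixed $\bm j\not\equiv{\bm 0}$ one needs $\#\{\bk\in\Gamma_N:\ \mu_N^{(\bk)}=\mu_N^{(\bk-\bm j)}\}=O(N^{d-1})$. This is the periodic counterpart of Lemma~\ref{c1}, and I would prove it the same way: choose a coordinate $l$ with $j_l\not\equiv0\pmod N$; after fixing the remaining $d-1$ coordinates ($N^{d-1}$ choices), the relation $\mu_N^{(\bk)}=\mu_N^{(\bk-\bm j)}$ becomes $\cos\frac{2\pi k_l}{N}-\cos\frac{2\pi(k_l-j_l)}{N}=\star$, i.e.\ $2\sin\!\big(\tfrac{2\pi k_l}{N}-\tfrac{\pi j_l}{N}\big)\sin\tfrac{\pi j_l}{N}=\star$ with $\sin\tfrac{\pi j_l}{N}\neq0$, which has $O(1)$ solutions $k_l\in[[1,N]]$. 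Combined with Parseval, $\sum_{\bm j}|\widehat{a_N}(\bm j)|^2=N^{-d}\sum_{\x}|a_N(\x,\x)|^2\le\|a_N\|_\infty^2$, this gives $\|a_N^\infty-\langle a_N\rangle\operatorname{Id}_{\Gamma_N}\|_{\rm HS}^2=O(N^{d-1}\|a_N\|_\infty^2)$, hence $N^{-d}\|a_N^\infty-\langle a_N\rangle\operatorname{Id}_{\Gamma_N}\|_{\rm HS}^2=O(N^{-1})\to0$, which finishes the proof. Alternatively, with no re-derivation, one may simply cite \cite{MS23}: its general criterion for periodic graphs reduces (partial) quantum ergodicity to exactly such a bound on the multiplicity of Floquet eigenvalues (cf.\ \cite[(1.3)]{MS23}), and in the present single-vertex-cell situation ($V\equiv0$) that bound is the elementary count above (carried out for general $\Z^d$ periodic Schr\"odinger operators in \cite{Liu24}), yielding the full statement.
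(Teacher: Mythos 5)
Your proof is correct. Note that the paper does not prove Theorem~\ref{tmm} at all --- it imports it from \cite{MS23}, whose argument goes through the Floquet transform and a general criterion on Floquet eigenvalue multiplicities. What you supply instead is a self-contained adaptation of the paper's own Section~\ref{pf1} machinery to the Fourier basis, which is exactly the route the authors allude to when they remark that the explicit-matrix approach ``also applies to the periodic conditions case, provided an explicit basis is known.'' All the pieces check out: the exponentials $e_N^{(\bk)}$ are genuinely eigenfunctions of $\mathfrak{A}_{\Gamma_N}$ because their periodic extension is the same exponential on $\Z^d$; Steps 1--3 are basis-independent, so it is legitimate that the variance is taken over an arbitrary eigenbasis while $a_N^\infty$ is computed in the Fourier basis; the identity $\mathcal{C}_N(\bk,\m)=\widehat{a_N}(\bk-\m)$ and the resulting exact cancellation of the diagonal part against $\langle a_N\rangle\operatorname{Id}_{\Gamma_N}$ are right (and indeed cleaner than \eqref{1912}--\eqref{1214}, where each entry is a signed combination of $4^d$ coefficients and the $\widetilde{e^{(\BT)}}$ must be partitioned into orthogonality classes); and your multiplicity count is the correct periodic analogue of Lemma~\ref{c1}, since $\bk\neq\m$ in $\Gamma_N$ forces $j_l\not\equiv 0\pmod N$ for some $l$, whence $\sin\frac{\pi j_l}{N}\neq 0$ and at most two solutions $k_l$ per choice of the remaining coordinates. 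Combined with Parseval this gives the $O(N^{d-1})$ Hilbert--Schmidt bound and hence the theorem. Your fallback of citing \cite{MS23} directly (with the single-vertex-cell multiplicity bound being elementary, or taken from \cite{Liu24}) is of course also valid and is what the paper actually does.
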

\begin{thm}[\cite{MS23}, finite range  observables]\label{nond}
	Recall  the definition of $L_{\z}$ (c.f.,  \eqref{Lz}).  The statement of Theorem \ref{thm.} holds with   $\A_{\Gamma_N}$ replaced by $\mathfrak{A}_{\Gamma_N}$ and the correlation \eqref{dco} replaced by 
	\begin{equation}\label{pco} 
		\langle K_N\rangle_{\psi_N^{(j)}}^{{\rm P}}:= \frac{1}{\#\Gamma_{N}}\sum_{\z } \sum_{\x \in L_{\z} }  K_N(\x,\x+\z) \langle\psi_N^{(j)},\tau_{\z}\psi_N^{(j)}\rangle,
	\end{equation} where  the periodic translation $\tau_{\z} : \ell^2(\Gamma_N)\to\ell^2(\Gamma_N) $ is defined  as 
	$$(	\tau_{\z}\psi)(\x):={\psi}^{\rm P}(\x+\z),\ \  \psi\in \ell^2(\Gamma_N), \ \x\in \Gamma_{N}. $$

\end{thm}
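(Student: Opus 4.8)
Since this statement is, as indicated, the finite--range quantum ergodicity theorem of McKenzie--Sabri \cite{MS23} for the one--vertex fundamental cell on $\Z^d$, one route is to cite it directly. For a self--contained argument I would run the four--step scheme of Section~\ref{pf1} again, with the sine basis $\Si_N$ replaced by the Bloch basis of the torus: the orthonormal eigenbasis $f_N^{(\bk)}(\x):=N^{-d/2}e^{2\pi{\rm i}\langle\bk,\x\rangle/N}$ ($\bk\in\Gamma_N$) of $\mathfrak{A}_{\Gamma_N}$, with eigenvalues $\mu_N^{(\bk)}=\sum_{l=1}^{d}2\cos\frac{2\pi k_l}{N}$.

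First I would introduce the translation--invariant (``convolution'') operator $E_N:=\sum_{\|\z\|_1\le R}c_{N,\z}\,\tau_{\z}$ with $c_{N,\z}:=\frac{1}{\#\Gamma_N}\sum_{\x\in L_{\z}}K_N(\x,\x+\z)$, so that $\langle\psi,E_N\psi\rangle=\langle K_N\rangle^{\rm P}_{\psi}$ for every $\psi\in\ell^2(\Gamma_N)$. Each $\tau_{\z}$ commutes with $\mathfrak{A}_{\Gamma_N}$, hence so does $E_N$; and since $\tau_{\z}f_N^{(\bk)}=e^{2\pi{\rm i}\langle\bk,\z\rangle/N}f_N^{(\bk)}$, the operator $E_N$ is diagonal in the Bloch basis, with exactly the same diagonal entries as $K_N$ (rewrite $\langle f_N^{(\bk)},K_N f_N^{(\bk)}\rangle$ by summing over $\z=\y-\x$). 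Setting $T_N:=K_N-E_N$, Steps~1--3 of Section~\ref{pf1} carry over verbatim — the time--average $T_N\mapsto T_N^{\infty}$ is available because $E_N$ commutes with $\mathfrak{A}_{\Gamma_N}$, and $\operatorname{Var}(T_N)=\operatorname{Var}(T_N^{\infty})\le\frac{1}{N^d}\|T_N^{\infty}\|_{\rm HS}^2$ — so the claim reduces to showing $\|T_N^{\infty}\|_{\rm HS}^2=o(N^d)$, where in the Bloch basis $T_N^{\infty}(\bk,\m)=\langle f_N^{(\bk)},K_N f_N^{(\m)}\rangle\,\delta_{\mu_N^{(\bk)},\mu_N^{(\m)}}$ off the diagonal and $0$ on it.

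For Step~4 I would expand $\langle f_N^{(\bk)},K_N f_N^{(\m)}\rangle$ as a combination of at most $C_{R,d}$ discrete Fourier coefficients $\widehat{(K_N)_{\z}}(\m-\bk)$ ($\|\z\|_1\le R$) of the diagonals $\x\mapsto K_N(\x,\x+\z)$, twisted by unimodular phases $e^{2\pi{\rm i}\langle\m,\z\rangle/N}$ (which do not affect magnitudes). By Cauchy--Schwarz and the exact Parseval identity on $\Gamma_N$ — here, unlike Step~4.4, no orthogonality--class partition is needed, since the torus characters $e^{2\pi{\rm i}\langle\D,\cdot\rangle/N}$ are genuinely orthonormal — one obtains $\|T_N^{\infty}\|_{\rm HS}^2\le C_{R,d}\cdot\max_{{\bm 0}\ne\D\in(\Z/N\Z)^d}\#\{\bk\in\Gamma_N:\ \mu_N^{(\bk)}=\mu_N^{(\bk+\D)}\}$. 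The essential input — the periodic--conditions analogue of Lemma~\ref{c1} — is that this maximum is $O(N^{d-1})$: from $\mu_N^{(\bk)}-\mu_N^{(\bk+\D)}=4\sum_{l}\sin\frac{\pi\delta_l}{N}\sin\frac{\pi(2k_l+\delta_l)}{N}$, one picks a coordinate $l_0$ with $\delta_{l_0}\not\equiv 0\pmod N$ (possible since $\D\ne{\bm 0}$), freezes the other $d-1$ coordinates, and is left with $\sin\frac{\pi(2k_{l_0}+\delta_{l_0})}{N}=\text{const}$, which has at most two solutions $k_{l_0}$ because $\sin\frac{\pi\delta_{l_0}}{N}\ne 0$. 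Hence $\|T_N^{\infty}\|_{\rm HS}^2=O(N^{d-1})$ and $\operatorname{Var}(T_N)=O(1/N)\to 0$.

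I expect the counting statement to be the only genuine obstacle: controlling how often the band function $\bk\mapsto\sum_l 2\cos\frac{2\pi k_l}{N}$ repeats values along a lattice line of fixed direction. For the pure adjacency matrix it is the elementary trigonometry above — essentially Lemma~\ref{c1} transported to the torus, with the phases, the bounded number of frequencies per entry, and the replacement of the sine basis by characters all being routine bookkeeping. For periodic Schr\"odinger operators with a larger fundamental cell the single band is replaced by several Bloch bands, and the same count becomes the delicate Bloch--variety analysis of \cite{Liu24}; this is exactly why the Dirichlet counterpart (Theorem~\ref{ps}) is limited to periods of length at most two.
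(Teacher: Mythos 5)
Your proposal is correct, but note that the paper does not prove Theorem \ref{nond} at all: it is imported verbatim from \cite{MS23} and used as a black box, so ``cite it directly'' is precisely what the paper does. Your self-contained sketch is a genuine addition, and it checks out. It is essentially the paper's Section \ref{pf1} argument transplanted from the Dirichlet cube to the torus: the operator $E_N=\sum_{\|\z\|_1\le R}c_{N,\z}\tau_{\z}$ correctly reproduces \eqref{pco} (one verifies $\langle f_N^{(\bk)},K_Nf_N^{(\bk)}\rangle=\sum_{\z}c_{N,\z}e^{2\pi{\rm i}\langle\bk,\z\rangle/N}=\langle f_N^{(\bk)},E_Nf_N^{(\bk)}\rangle$ by the substitution $\y=\x+\z$), it commutes with $\mathfrak{A}_{\Gamma_N}$ so survives the time average, and $T_N^\infty$ is supported on off-diagonal resonant pairs $\mu_N^{(\bk)}=\mu_N^{(\m)}$. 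The Bloch basis buys you two simplifications over the Dirichlet sine basis: the characters are exactly orthonormal, so Parseval replaces the $4^d$-fold orthogonality-class partition of Step 4.4, and each matrix entry involves a single frequency $\D=\m-\bk$ per shift $\z$ rather than the $4^d$ sign combinations in \eqref{1912}. Your counting bound $\#\{\bk:\mu_N^{(\bk)}=\mu_N^{(\bk+\D)}\}\le 2N^{d-1}$ is the exact torus analogue of Lemma \ref{c1} (the identity $\mu_N^{(\bk)}-\mu_N^{(\bk+\D)}=4\sum_l\sin\frac{\pi\delta_l}{N}\sin\frac{\pi(2k_l+\delta_l)}{N}$ is right, and $\sin\frac{\pi\delta_{l_0}}{N}\ne0$ forces at most two values of $k_{l_0}$ among the $N$ distinct angles), and it is also the concrete verification, for $\Z^d$ with trivial fundamental cell, of the Floquet-eigenvalue criterion \cite[equation (1.3)]{MS23}. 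The only caveat is scope: your argument covers exactly the adjacency matrix on $\Z^d$, which is all Theorem \ref{nond} asserts, but it does not reproduce the general periodic-graph theorem of \cite{MS23}; as you correctly note, larger fundamental cells turn the counting step into the Bloch-variety analysis of \cite{Liu24}.
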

\begin{rem}
	One should note the small difference between $\rho_{\z}$ in  \eqref{dco} and  $\tau_{\z}$ in \eqref{pco}. In fact, combining  $\|\tau_{\z}-\rho_{\z}\|_{\rm HS}^2=O(N^{d-1}) $ and \eqref{4}, we can  prove that if \eqref{1.} holds for the correlation given by \eqref{dco}, then it also holds for the correlation given by \eqref{pco}, 	and vice versa. Here we present  Theorem \ref{nond}  following the original statement  of \cite{MS23}.
\end{rem}
\subsection{The injective map $\he$ and eigenfunction correspondence}\label{map}
We establish an injective map  $\he$ from $\mathcal{A}_{\Gamma_N}$-eigenfunctions to $\mathfrak{A}_{\Gamma_{2N+2}}$-eigenfunctions through zero-extension and symmetric reflection of eigenfunctions.
	\begin{defn}\label{df}
		Denote by $\mathfrak{R}_{N}^l:\Gamma_{2N+2}\to \Gamma_{2N+2}$ the reflection on the $l$-coordinate:
		$$\mathfrak{R}_{N}^l(\x)_{l'}:=\begin{cases}
			x_{l'}, \ \ &l'\neq l,\\
			2N+2-x_{l'}, \ \ &l'= l . 
		\end{cases}$$
		Define the injective map  $$\he:\ell^2(\Gamma_N)\to \ell^2(\Gamma_{2N+2})$$ 
		such that for $\psi_N\in \ell^2(\Gamma_N)$, $\he\psi_N\in \ell^2(\Gamma_{2N+2})$ satisfies  \eqref{5}--\eqref{7}, 
		\begin{align}
			\label{5}	(\he\psi_N)(\x)&=2^{-\frac{d}{2}}\psi_N(\x), &   \x \in \Gamma_N,\\ 
			\label{6}		(\he\psi_N)(\x)&=0,  &\exists l,\  N+1\mid x_l, \\ 
			(\he\psi_N)(\mathfrak{R}_{N}^l(\x))&=-(\he\psi_N)(\x), &\forall 1\leq l\leq d. \label{7}
		\end{align}
		Such  $\he\psi_N$ is existent and  unique.
	\end{defn} 
	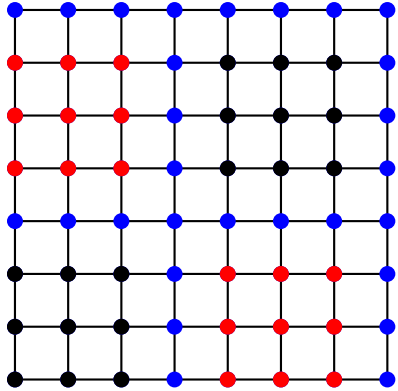
\begin{figure}[htp]
		\centering
		\begin{tikzpicture}[scale=0.7]

			\draw[step=1, black, thick] (0,0) grid (7,7);

			\foreach \x in {0,1,...,7} {
				\foreach \y in {0,1,...,7} {
			
					\ifnum \x>-1  \ifnum \y>-1 
					\fill[blue] (\x,\y) circle (0.15);
					\fi\fi
				
					\ifnum \x<3 \ifnum \y<3
					\fill[black] (\x,\y) circle (0.15);
					\fi\fi
					\ifnum \x>3 	\ifnum 7>\x  \ifnum \y>3 \ifnum 7>\y
					\fill[black] (\x,\y) circle (0.15);
					\fi\fi\fi\fi

					\ifnum \x<3 \ifnum \y<7  \ifnum \y>3
					\fill[red] (\x,\y) circle (0.15);
					\fi\fi\fi
					\ifnum \y<3 \ifnum \x<7  \ifnum \x>3
					\fill[red] (\x,\y) circle (0.15);
					\fi\fi\fi

				}
			}
			
		\end{tikzpicture}
		\caption{A description of the map $\he$ for $d=2,N=3$:   	$\he\psi$ takes $0$ at the blue points,
			the same sign as $\psi$ at the black points, and the positive sign to $\psi$ at the red points.
		}
	\end{figure}
	By the definition of $\he$, we have the following eigenfunction correspondence:
	\begin{prop}\label{eb}
		Let $\psi_N$ be a normalized eigenfunction of $\A_{\Gamma_N}$ corresponding to the eigenvalue $\lambda$.  Then 	$\he\psi_N$ is a normalized eigenfunction of $\mathfrak{A}_{\Gamma_{2N+2}}$ corresponding to the eigenvalue $\lambda$. Moreover, if $\{\psi_N^{(j)}\}_{j=1}^{N^d}$ is an orthonormal basis of $\ell^2(\Gamma_N)$ consisting of eigenfunctions of $\A_{\Gamma_N}$, then $\{\he\psi_N^{(j)}\}_{j=1}^{N^d}$ is an orthonormal class of $\ell^2(\Gamma_{2N+2})$ consisting of eigenfunctions of $\mathfrak{A}_{\Gamma_{2N+2}}$. 
	\end{prop}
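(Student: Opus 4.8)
The plan is to verify the three required properties of $\he\psi_N$ directly from Definition \ref{df}: existence/uniqueness (already asserted, so I only need that the construction makes sense), normalization, and the eigenfunction equation, and then to deduce the orthonormality of the family $\{\he\psi_N^{(j)}\}$ from these pointwise facts.

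First I would pin down the structure of $\he\psi_N$ on all of $\Gamma_{2N+2}$. The cube $\Gamma_{2N+2} = [[1,2N+2]]^d$ decomposes along each coordinate into the ``slab'' $[[1,N]]$, the singleton $\{N+1\}$, the slab $[[N+2,2N+1]]$, and the singleton $\{2N+2\}$; by \eqref{6} the function vanishes whenever any coordinate lies in $\{N+1,2N+2\}$ (note $2(N+1)\mid x_l$ forces $x_l = 2N+2$, and $N+1 \mid x_l$ with $x_l \le 2N+2$ gives $x_l \in \{N+1, 2N+2\}$), while on the product of ``interior'' slabs the relations \eqref{5} and \eqref{7} determine the value: on each such block the coordinates are obtained from a point of $\Gamma_N$ by reflecting in some subset $S \subseteq \{1,\dots,d\}$ of the coordinates through $x_l \mapsto 2N+2-x_l$, and \eqref{7} gives $(\he\psi_N)(\mathfrak{R}_N^S \x) = (-1)^{|S|} 2^{-d/2}\psi_N(\x)$. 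Since the reflections $\mathfrak{R}_N^l$ commute and each is an involution, this prescription is consistent, which is exactly the existence and uniqueness claim; I would spend a sentence on this but not belabor it.

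Normalization is then a bookkeeping computation: $\|\he\psi_N\|_{\ell^2(\Gamma_{2N+2})}^2$ is a sum over the $2^d$ interior blocks, on each of which $|\he\psi_N|^2 = 2^{-d}|\psi_N|^2$ after transporting by the relevant reflection, plus zero contribution from the slabs containing a coordinate in $\{N+1,2N+2\}$; hence the norm squared equals $2^d \cdot 2^{-d}\|\psi_N\|_{\ell^2(\Gamma_N)}^2 = 1$. The eigenfunction equation is the main point. I need $(\mathfrak{A}_{\Gamma_{2N+2}}\he\psi_N)(\x) = \lambda (\he\psi_N)(\x)$ for every $\x \in \Gamma_{2N+2}$, where by \eqref{pero} the left side sums $(\he\psi_N)^{\rm P}(\y)$ over the $2d$ lattice neighbors $\y$ of $\x$, using the \emph{periodic} extension of $\he\psi_N$ with period $2N+2$ in each coordinate. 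I would split into cases according to where $\x$ sits:

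\emph{(i) $\x$ has some coordinate $x_{l_0} \in \{N+1, 2N+2\}$.} Then $(\he\psi_N)(\x) = 0$, so I must show the neighbor sum vanishes. Group the $2d$ neighbors into $d$ antipodal pairs along each coordinate direction. For a direction $l \ne l_0$, the two neighbors $\x \pm \e_l$ still have their $l_0$-coordinate in $\{N+1,2N+2\}$, so each contributes $0$. For the direction $l_0$ itself: if $x_{l_0} = N+1$, the neighbors are $\x - \e_{l_0}$ (which is $\mathfrak{R}_N^{l_0}$ of $\x + \e_{l_0}$, since $2N+2 - (N+2) = N$) and $\x + \e_{l_0}$, whose values are negatives of each other by \eqref{7} (extended to the periodic setting by the symmetry, which I check commutes with periodization), hence cancel; if $x_{l_0} = 2N+2$, then $\x + \e_{l_0}$ periodizes to the point with $l_0$-coordinate $1$ and $\x - \e_{l_0}$ has $l_0$-coordinate $2N+1$, and under $\mathfrak{R}_N^{l_0}$ composed with periodicity these are again antipodal in sign, so they cancel. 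Thus the whole sum is $0 = \lambda \cdot 0$.

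\emph{(ii) $\x$ is an interior point}, i.e. every coordinate lies in $[[1,N]]\cup[[N+2,2N+1]]$. Write $\x = \mathfrak{R}_N^S \x_0$ with $\x_0 \in \Gamma_N$ and $S$ the set of coordinates in the ``far'' slab. I would argue that the map $\y \mapsto$ (its $\he\psi_N$-value, via periodic extension and the reflection/sign relations) carries the neighbor sum for $\x$ at $\mathfrak{A}_{\Gamma_{2N+2}}$ exactly onto $(-1)^{|S|}2^{-d/2}$ times the neighbor sum for $\x_0$ at $\A_{\Gamma_N}$ — the key subpoint being the boundary behavior: a neighbor of $\x_0$ in direction $l$ that would leave $\Gamma_N$ (i.e. $x_{0,l} \in \{1,N\}$ stepping out to $0$ or $N+1$) corresponds on the $\Gamma_{2N+2}$ side either to a point with a coordinate in $\{N+1, 2N+2\}$, where $\he\psi_N$ vanishes by \eqref{6}, or — after periodization — to a reflected copy, and in the latter case the sign relation \eqref{7} together with the Dirichlet cancellation shows it contributes exactly what the absent term ``$\psi_N(\text{out-of-range})=0$'' contributes, namely nothing. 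Concretely I expect to reduce to the one-dimensional identity: for the $1$D adjacency matrix $\Delta_N$ with eigenfunction $s_N^{(k)}(x) = \sqrt{2/(N+1)}\sin\frac{k\pi x}{N+1}$ and eigenvalue $2\cos\frac{k\pi}{N+1}$, the $2N+2$-periodic odd-about-$(N+1)$ extension satisfies the $1$D periodic eigenvalue equation with the same eigenvalue — a direct trigonometric check using $\sin\frac{k\pi(x-1)}{N+1} + \sin\frac{k\pi(x+1)}{N+1} = 2\cos\frac{k\pi}{N+1}\sin\frac{k\pi x}{N+1}$ and the fact that at $x \in \{N+1, 2N+2\}$ the sine vanishes. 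Taking tensor products over the $d$ coordinates (and invoking $\mathfrak{A}_{\Gamma_{2N+2}} = \sum_l \mathfrak{A}_{\Gamma_{2N+2},l}$, the periodic analogue of the splitting used for \eqref{tez}), and using that it suffices to verify the eigenfunction equation on any spanning set while the general $\psi_N$ is a linear combination of the $s_N^{(\bk)}$, gives the eigenvalue $\lambda = \sum_l 2\cos\frac{k_l\pi}{N+1}$ as claimed.

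\emph{Orthonormality of the family.} Given an orthonormal basis $\{\psi_N^{(j)}\}$, each $\he\psi_N^{(j)}$ is normalized by the computation above, so it remains to show $\langle \he\psi_N^{(i)}, \he\psi_N^{(j)}\rangle_{\ell^2(\Gamma_{2N+2})} = \langle \psi_N^{(i)}, \psi_N^{(j)}\rangle_{\ell^2(\Gamma_N)}$ for all $i,j$; this follows from the same block decomposition, since on each of the $2^d$ interior blocks the product $\overline{\he\psi_N^{(i)}}\,\he\psi_N^{(j)}$ equals $2^{-d}\overline{\psi_N^{(i)}}\,\psi_N^{(j)}$ transported by the corresponding reflection (the signs $(-1)^{|S|}$ cancel in the product), and the slabs with a coordinate in $\{N+1,2N+2\}$ contribute zero; summing the $2^d$ blocks gives $2^d \cdot 2^{-d}\langle\psi_N^{(i)},\psi_N^{(j)}\rangle = \delta_{ij}$. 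Hence $\{\he\psi_N^{(j)}\}_{j=1}^{N^d}$ is an orthonormal system in $\ell^2(\Gamma_{2N+2})$ consisting of $\mathfrak{A}_{\Gamma_{2N+2}}$-eigenfunctions (it is not a basis, since $\dim \ell^2(\Gamma_{2N+2}) = (2N+2)^d > N^d$, which is why the statement says ``orthonormal class'').

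\emph{Main obstacle.} The routine-looking but genuinely delicate step is case (ii), specifically matching the boundary terms: one must check that stepping from $\Gamma_N$ across its Dirichlet boundary corresponds, after periodization of $\he\psi_N$ with period $2N+2$, precisely to landing on a zero-node (coordinate $\equiv 0 \bmod N+1$) or on a reflected copy whose signed value reproduces the ``missing'' Dirichlet term. Getting the bookkeeping of reflections, periodization, and signs to line up simultaneously in all $d$ directions is where care is needed; reducing to the one-dimensional sine identity and then tensoring is the cleanest way to make this airtight, and is the route I would take.
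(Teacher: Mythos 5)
Your proof is correct. The paper does not actually supply a proof of Proposition \ref{eb} --- it is asserted to follow ``by the definition of $\he$'' --- and your verification is precisely the intended argument: the block decomposition of $\Gamma_{2N+2}$ into the $2^d$ reflected copies of $\Gamma_N$ plus the zero nodes $\{x_l\in\{N+1,2N+2\}\}$ gives normalization and orthogonality (with the signs $(-1)^{|S|}$ cancelling in pairings), and the eigenfunction equation reduces, via the tensor structure \eqref{sb}--\eqref{tez} already set up in Section 2 and the linearity of $\he$, to the one-dimensional fact that the $(2N+2)$-periodic odd extension of $s_N^{(k)}$ satisfies the periodic difference equation with the same eigenvalue $2\cos\frac{k\pi}{N+1}$. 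One small remark: in your case (ii) the ``reflected copy'' alternative never actually occurs --- every neighbor of an interior point that leaves its block lands on a zero node by \eqref{6} --- so that case is even simpler than you allow; the sign cancellation from \eqref{7} is only needed at the node points, as in your case (i).
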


\subsection{Proof of Theorem \ref{thm}}\label{proof1}
With Theorem  \ref{tmm} and Proposition \ref{eb}, we are ready to prove Theorem \ref{thm}.

	\begin{proof}[Proof of Theorem \ref{thm}]
		Consider the diagonal observable $\mathfrak{a}_N$ on $\Gamma_{2N+2}$ defined by 
		$$\mathfrak{a}_N(\x,\x):=\begin{cases}
			a_N(\x,\x), \ \ &\x\in \Gamma_N,\\
			0, \ \ &\x\in \Gamma_{2N+2}\setminus \Gamma_{N} . 
		\end{cases}$$
		Let $\{\psi_N^{(j)}\}_{j=1}^{N^d}$ be an orthonormal basis of $\ell^2(\Gamma_N)$ consisting of eigenfunctions of $\A_{\Gamma_N}$. Since  $\{\he\psi_N^{(j)}\}_{j=1}^{N^d}$ is an orthonormal class of $\ell^2(\Gamma_{2N+2})$ consisting of eigenfunctions of $\mathfrak{A}_{\Gamma_{2N+2}}$ by Proposition \ref{eb}, we can extend $\{\he\psi_N^{(j)}\}_{j=1}^{N^d}$ to an orthonormal basis of $\ell^2(\Gamma_{2N+2})$ consisting of eigenfunctions of $\mathfrak{A}_{\Gamma_{2N+2}}$. Denote the basis by $\{\varphi_N^{(j)}\}_{j=1}^{(2N+2)^d}$, where $\varphi_N^{(j)}=\he\psi_N^{(j)}$ for $1\leq j\leq N^d$.    By definition, we have 
		\begin{align*}
			\langle \psi_N^{(j)}, a_N \psi_N^{(j)}\rangle&=2^d\langle \he\psi_N^{(j)}, \mathfrak{a}_N \he\psi_N^{(j)}\rangle, \quad 
			\langle a_N\rangle=\frac{(2N+2)^d}{N^d}\langle \mathfrak{a}_N\rangle.
		\end{align*}
		Thus,  \begin{align*}
			& \frac{1}{N^d} \sum_{j=1}^{N^d}\left| \langle \psi_N^{(j)}, a_N \psi_N^{(j)}\rangle-\langle a_N\rangle\right|^2\\
			=&\frac{1}{N^d} \sum_{j=1}^{N^d}4^d \left| \langle \he\psi_N^{(j)}, \mathfrak{a}_N \he\psi_N^{(j)}\rangle-\frac{(N+1)^d}{N^d}\langle \mathfrak{a}_N\rangle\right|^2\\
			\leq &\frac{1}{N^d} \sum_{j=1}^{N^d}4^d \left(2\left| \langle \he\psi_N^{(j)}, \mathfrak{a}_N \he\psi_N^{(j)}\rangle-\langle \mathfrak{a}_N\rangle\right|^2+2\left|\left (\frac{(N+1)^d}{N^d}-1\right )\langle \mathfrak{a}_N\rangle\right|^2\right)\\
			\leq &   \frac{16^d}{(2N+2)^d}  \sum_{j=1}^{(2N+2)^d} \left| \langle \varphi_N^{(j)}, \mathfrak{a}_N \varphi_N^{(j)}\rangle-\langle \mathfrak{a}_N\rangle\right|^2+O\left(\frac{1}{N^2}\right).
		\end{align*}
		By Theorem \ref{tmm}, the first term in  the last line of the inequality tends to $0$ as $N\to \infty$, which implies Theorem \ref{thm}.
	\end{proof} 

\subsection{Proof of Theorem \ref{thm.}}\label{pf21}
In this part, we extend the eigenfunction correspondence to establish  quantum ergodicity for  finite range observables, thereby proving Theorem \ref{thm.}.

\begin{proof}[Proof of Theorem \ref{thm.}]
We consider the finite range observable $\mathcal{K}_N$  on $\Gamma_{2N+2}$ defined by 
 $$\mathcal{K}_N(\x,\y):=\begin{cases}
 	K_N(\x,\y), \ \ &\x,\y\in \Gamma_N,\\
 	0, \ \ &\text{otherwise} , 
 \end{cases}$$
Recalling  Definition \ref{df},  we have 
 \begin{align*}
 	\langle \psi_N^{(j)}, K_N \psi_N^{(j)}\rangle&=2^d\langle \he\psi_N^{(j)}, \mathcal{K}_N \he\psi_N^{(j)}\rangle.
 \end{align*}
	Thus, it follows from Theorem \ref{nond} and Proposition \ref{eb} that   \begin{align}\label{l3}
	\lim _{N \rightarrow \infty} \frac{1}{N^d} \sum_{j=1}^{N^d}\left| \langle \psi_N^{(j)}, K_N \psi_N^{(j)}\rangle-2^d \langle\mathcal{ K}_N\rangle_{\he \psi_N^{(j)}}^{\rm P}\right|^2=0.
\end{align}
Next,  we introduce the  averaged observable $\widetilde{K_N}$ of $K_N$ for $\|\z\|_1\leq R$: 
\begin{equation*}
	\widetilde{K_N}(\x,\x+\z):=
\frac{1}{\#L_{\z}} \sum_{\y\in L_{\z}}K_N(\y,\y+\z),  \ \ \x\in L_{\z},
\end{equation*}
 which is also equivalent to (recalling  \eqref{Lz}, the definition of $\rho_{\z}$)
\begin{equation}\label{dingyi}
	\widetilde{K_N}=\sum_{\z}  \sum_{\x\in L_{\z}}  \frac{1}{\#L_{\z}}K_N(\x,\x+\z) \rho_{\z}.
\end{equation}
 That is, $\widetilde{K_N}$ takes the average of $K_N$ on  $\{(\x,\x+\z):\  \x\in L_{\z}\}$. Particularly, we have   
\begin{equation}\label{xd}
	\sum_{\x\in L_{\z}}K_N(\x,\x+\z)=\sum_{\x\in L_{\z}}\widetilde{K_N}(\x,\x+\z)
\end{equation}
Consider   $\widetilde{\mathcal{K}_N}$ on  $\Gamma_{2N+2}$ defined by    $$\widetilde{\mathcal{K}_N}(\x,\y):=\begin{cases}
 	\widetilde{K_N}(\x,\y), \ \ &\x,\y\in \Gamma_N,\\
 	0, \ \ &\text{otherwise} .
 \end{cases}$$
Analogous to \eqref{l3}, we have  \begin{align}\label{l2}
 	\lim _{N \rightarrow \infty} \frac{1}{N^d} \sum_{j=1}^{N^d}\left| \langle \psi_N^{(j)}, \widetilde{K_N} \psi_N^{(j)}\rangle-2^d \langle\widetilde{\mathcal{ K}_N}\rangle_{\he \psi_N^{(j)}}^{\rm P}\right|^2=0.
 \end{align}
Recall the definition of $ \langle K\rangle_{ \psi}^{\rm P} $ (c.f., \eqref{pco}). It follows from  \eqref{xd} that  
\begin{equation}\label{l1}
	 \langle\mathcal{ K}_N\rangle_{\he \psi_N^{(j)}}^{\rm P}=\langle\widetilde{\mathcal{ K}_N}\rangle_{\he \psi_N^{(j)}}^{\rm P}.
\end{equation}
By \eqref{l3}, \eqref{l2} and \eqref{l1}, we have 
\begin{align}\label{linmen}
	\lim _{N \rightarrow \infty} \frac{1}{N^d} \sum_{j=1}^{N^d}\left| \langle \psi_N^{(j)}, K_N \psi_N^{(j)}\rangle-\langle \psi_N^{(j)}, \widetilde{K_N} \psi_N^{(j)}\rangle\right|^2=0.
\end{align}
Substituting \eqref{dingyi} into \eqref{linmen}, we finish the proof of Theorem \ref{thm.}.
\end{proof}

	\subsection{Proof of Proposition \ref{app} and Theorem \ref{ps}}\label{pf22}
In this part, we discuss the partial quantum ergodicity for periodic Schr\"odinger operators.

	First, we prove Proposition \ref{app} by giving   an example that   periodic Schr\"odinger operator $\mathcal{H}_{\Lambda_N}$ has  no almost equidistributed eigenfunction. This shows that the ``partial'' is necessary.
\begin{proof}[Proof of Proposition \ref{app}]
		Let $d=1$ and $q_1=2$. Consider the $2$-periodic potential 
	$$V(\x)= \begin{cases}
		M, \ \ &2\mid\x,\\
		0, \ \ &2\nmid\x,	 \end{cases}$$
	for some  $M\gg1$. 
	By perturbation theory for self-adjoint operators, it follows from $\|\A_{\Lambda_{N}}\|\leq 2$ that $\mathcal{H}_{\Lambda_N}$ has $N$ eigenvalues in $[M-2,M+2]$ and $N$ eigenvalues in $[-2,2].$
	Assume that $\psi$ is a $\ell^2$-normalized eigenfunction of $\mathcal{H}_{\Lambda_N}$ corresponding to $\lambda\in[-2,2]$. For $2\mid\x$, we have 
	$$(M-\lambda)\psi(\x)=-\sum_{\substack{\y\in \Lambda_N \\ |\y-\x|=1 }}\psi(\y).$$
	Thus by  Cauchy inequality, $$|M-\lambda|^2|\psi(\x)|^2\leq2 \sum_{\substack{\y\in \Lambda_N \\ |\y-\x|=1 }}|\psi(\y)|^2.$$
	Summing the above equality for $\x$ such that  $2\mid\x$ yields 
	$$\sum_{2\mid\x }|\psi(\x)|^2\leq \frac{4}{|M-\lambda|^2}\|\psi\|_{\ell^2}^2\leq \frac{4}{|M-2|^2}.$$
	Thus, $$\sum_{2\nmid\x }|\psi(\x)|^2\geq1- \frac{4}{|M-2|^2}.$$
	By the same argument, for any  eigenfunction ${\psi'}$ corresponding to eigenvalue  ${\lambda'}\in [M-2,M+2]$, we have 
	$$\sum_{2\mid\x }|{\psi'}(\x)|^2\geq1- \frac{4}{|M-2|^2}.$$
	Since $M\gg1$,  $\mathcal{H}_{\Lambda_N}$ has  $N$ eigenfunctions with  $\ell^2$-mass  concentrated on odd points and the $N$ eigenfunctions with  $\ell^2$-mass  concentrated on even points. Thus quantum ergodicity does not hold for $\mathcal{H}_{\Lambda_N}$.  
\end{proof}

Next, we use the eigenfunction correspondence  to prove Theorem \ref{ps}.
\begin{defn}
Recalling \eqref{pz}, let   $\mathfrak{H}_{\Lambda_N}$ be  the finite truncation of $H$ to $\Lambda_N$ with periodic condition 
$$(\mathfrak{H}_{\Lambda_N} \psi)(\x):=(\mathfrak{A}_{\Lambda_N}\psi)(\x)+V(\x) \psi(\x), \ \ \psi\in \ell^2(\Lambda_N),\   \x\in \Lambda_N,$$
where $\mathfrak{A}_{\Lambda_N}$ is defined analogously to  \eqref{pero} (the periodic extension ${\psi}^{\rm P}$  is now   concerning $\Lambda_N$ instead of $\Gamma_{N}$).
\end{defn}
The following result for  partial  quantum ergodicity of  $\mathfrak{H}_{\Lambda_{N}}$ was proved  by    \cite{MS23,Liu24}:
\begin{thm}[\cite{MS23,Liu24}]\label{tmm1}
The  statement of Theorem \ref{ps} holds  	with  $\mathcal{H}_{\Lambda_N}$ replaced by $\mathfrak{H}_{\Lambda_N}$ (assumption \eqref{cd} can be removed).
\end{thm}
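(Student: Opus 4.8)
The plan is to reconstruct the proof of \cite{MS23,Liu24}: run the four-step quantum-variance scheme of Section~\ref{pf1} but with the Floquet--Bloch transform replacing the sine decomposition, reducing partial quantum ergodicity to a counting estimate on the Floquet eigenvalues which, for the $\Z^d$-lattice, is supplied by the Bloch-variety analysis of \cite{Liu24}; the restriction \eqref{cd} plays no role since the Floquet machinery is available for every period $\q$. First I would diagonalize $\mathfrak{H}_{\Lambda_N}$. Identifying $\ell^2(\Lambda_N)$ with $\ell^2(\Z^d/N\LL)$, the discrete Floquet transform splits it into $N^d$ fibres indexed by the characters $\theta$ of $\LL/N\LL$, on each of which $\mathfrak{H}_{\Lambda_N}$ acts as a finite Hermitian Bloch matrix $H(\theta)$ on $\ell^2(\Lambda_1)$ (the adjacency part dressed with Bloch phases, plus $\operatorname{diag}(V|_{\Lambda_1})$); its eigenvalues $\lambda_1(\theta)\le\cdots\le\lambda_{\#\Lambda_1}(\theta)$ are the band functions sampled on the $O(1/N)$-spaced grid $\Theta_N$. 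When the $\#\Lambda_N$ numbers $\{\lambda_r(\theta)\}$ are pairwise distinct, $\mathfrak{H}_{\Lambda_N}$ has a canonical orthonormal Bloch basis $\phi_{\theta,r}(\x)=N^{-d/2}e^{{\rm i}\theta\cdot\x}u_{\theta,r}(\x\bmod\LL)$ with $H(\theta)u_{\theta,r}=\lambda_r(\theta)u_{\theta,r}$, $\|u_{\theta,r}\|=1$. The density $|\phi_{\theta,r}|^2$ is $\LL$-periodic, so its mass on every periodic block is $N^{-d}$; hypothesis~\eqref{lc} then yields $\langle\phi_{\theta,r},a_N\phi_{\theta,r}\rangle=N^{-d}\sum_{\bk\in\LL_N}a_N(\x+\bk,\x+\bk)=\langle a_N\rangle$ \emph{exactly}. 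This is where the word ``partial'' enters: the within-block profile of $|\phi_{\theta,r}|^2$ is typically non-uniform (Proposition~\ref{app} exhibits sublattice localization), but the block masses are rigid.

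Second, I would argue as in Steps~1--3 of Section~\ref{pf1}, now for $\mathfrak{H}_{\Lambda_N}$: set $\operatorname{Var}(a_N)=\frac1{\#\Lambda_N}\sum_j|\langle\psi_N^{(j)},(a_N-\langle a_N\rangle)\psi_N^{(j)}\rangle|^2$, use the eigenfunction identity to replace $a_N-\langle a_N\rangle$ by its time-average $\widetilde a_N^\infty$ under $e^{{\rm i}t\mathfrak{H}_{\Lambda_N}}$ --- the orthogonal projection onto the commutant of $\mathfrak{H}_{\Lambda_N}$, which is independent of the chosen eigenbasis, so the degeneracy/``all eigenbases'' issue is bypassed automatically --- and bound $\operatorname{Var}(a_N)\le\frac1{\#\Lambda_N}\|\widetilde a_N^\infty\|_{\rm HS}^2$. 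In the Bloch basis $\widetilde a_N^\infty$ has entries $\langle\phi_{\theta,r},(a_N-\langle a_N\rangle\operatorname{Id})\phi_{\theta',r'}\rangle\,\delta_{\lambda_r(\theta),\lambda_{r'}(\theta')}$; by the previous step the diagonal entries vanish, and a short computation gives $\langle\phi_{\theta,r},a_N\phi_{\theta',r'}\rangle=N^{-d}\sum_{\y\in\Lambda_1}\overline{u_{\theta,r}(\y)}\,u_{\theta',r'}(\y)\,\widehat{a_N^{(\y)}}(\theta'-\theta)$, where $\widehat{a_N^{(\y)}}(\omega)=\sum_{\bk\in\LL_N}e^{{\rm i}\omega\cdot(\y+\bk)}a_N(\y+\bk,\y+\bk)$. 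For $\omega=\theta'-\theta=0$ the factor $\widehat{a_N^{(\y)}}(0)$ is $\y$-independent by \eqref{lc} and the remaining sum is $\langle u_{\theta,r},u_{\theta,r'}\rangle=0$, so only $\omega\ne0$ survives; Cauchy--Schwarz in $\y$ followed by Parseval over $\LL_N$ then bounds $\|\widetilde a_N^\infty\|_{\rm HS}^2$ by $C_\q\,\|a_N\|_\infty^2\cdot\sup_{\omega\ne0}\#\{(\theta,r,r'):\lambda_r(\theta)=\lambda_{r'}(\theta+\omega)\}$ with $C_\q$ depending only on the period. Since $\#\Lambda_N=N^d\#\Lambda_1$, it then suffices to show this ``band-crossing count'' is $o(N^d)$ --- which is precisely the Floquet-eigenvalue criterion of \cite[equation~(1.3)]{MS23}, the periodic-conditions counterpart of Lemma~\ref{c1}.

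Third, I would establish the band-crossing count for the $\Z^d$-lattice, which is the part supplied by \cite{Liu24}. Fixing $\omega\ne0$ and $r,r'$, the set of $\theta$ with $\lambda_r(\theta)=\lambda_{r'}(\theta+\omega)$ lies in the zero set of $\operatorname{Res}_\lambda\big(\det(H(\theta)-\lambda),\det(H(\theta+\omega)-\lambda)\big)$, a trigonometric polynomial in $\theta$; if this polynomial is not identically zero its zero set is a proper real-analytic hypersurface of the Brillouin torus $\T^d$ and meets the grid $\Theta_N$ in $O(N^{d-1})$ points, and summing over the $(\#\Lambda_1)^2$ pairs $(r,r')$ gives $O(N^{d-1})=o(N^d)$, as needed (after first splitting $a_N$ into its block-average and a block-mean-zero remainder, the same estimate applies to both pieces). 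The substance is therefore to rule out, for \emph{every} period $\q$ and \emph{every} $\LL$-periodic $V$, the degenerate alternative that some band is a nontrivial $\omega$-translate of another, equivalently that $H(\theta)$ and $H(\theta+\omega)$ share an eigenvalue for a Zariski-dense set of $\theta$; this fails on generic grounds but holds in complete generality, and it is exactly what \cite{Liu24} proves through the algebraic geometry of the Bloch variety $\{(z,\lambda):\det(H(z)-\lambda)=0\}\subset(\C^*)^d\times\C$ --- its irreducibility together with the absence of hidden translational periodicity forces the relevant branch/discriminant locus to have the right codimension. I expect this Bloch-variety input to be the main obstacle; it is also the reason the periodic problem is genuinely softer than the Dirichlet one, where the analogous fact had to be proven by hand as Lemma~\ref{c1}.
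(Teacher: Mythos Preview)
The paper does not give its own proof of Theorem~\ref{tmm1}; it is quoted verbatim as an established result from \cite{MS23,Liu24} and used as a black box in the proof of Theorem~\ref{ps}. So there is no ``paper's proof'' to compare against --- your proposal is a reconstruction of the argument in the cited references.

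That reconstruction is faithful to the scheme the paper itself sketches in its introduction and in Section~\ref{est}: Floquet--Bloch diagonalization of $\mathfrak{H}_{\Lambda_N}$, the four-step quantum-variance strategy culminating in a Hilbert--Schmidt bound, the observation that hypothesis~\eqref{lc} forces the zero-momentum (diagonal-in-$\theta$) contribution to vanish via the orthogonality $\langle u_{\theta,r},u_{\theta,r'}\rangle=\delta_{r,r'}$, and the reduction of the remaining off-diagonal mass to the band-crossing criterion \cite[equation~(1.3)]{MS23}, which is then verified for the $\Z^d$-lattice by the Bloch-variety analysis of \cite{Liu24}. Your identification of the crux --- ruling out that some band function is a nontrivial translate of another --- is exactly what \cite{Liu24} supplies. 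The outline is correct and matches the cited works; the only caveat is that your third paragraph compresses a nontrivial algebraic-geometry argument (irreducibility of the Bloch variety) into a sentence, but since the paper itself treats this as an external input, that level of detail is appropriate here.
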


Now we are ready to prove Theorem \ref{ps}.
	\begin{proof}[Proof of Theorem \ref{ps}]
	Select an appropriate block 
		$$\Omega_N:=\prod_{l=1}^d[[1,2q_lN+2]].$$
	We need to   redefine $ \mathfrak{R}_{N}^l $ and $\he$:

		Let  $\mathfrak{R}_{N}^l:\Omega_N\to \Omega_N$ be the reflection on the $l$-coordinate:
		$$\mathfrak{R}_{N}^l(\x)_{l'}:=\begin{cases}
			x_{l'}, \ \ &l'\neq l,\\
			2q_lN+2-x_{l'}, \ \ &l'= l . 
		\end{cases}$$
		Define the injective map $$\he:\ell^2(\Lambda_N)\to \ell^2(\Omega_N)$$ 
		such that for $\psi_N\in \ell^2(\Lambda_N)$, $\he\psi_N$ is the unique element in  $\ell^2(\Omega_N)$ satisfying \eqref{5`}--\eqref{7`}, 
		\begin{align}
			\label{5`}	(\he\psi_N)(\x)&=2^{-\frac{d}{2}}\psi_N(\x), &   \x \in \Lambda_N,\\ 
			\label{6`}		(\he\psi_N)(\x)&=0,  &\exists l,\  q_lN+1\mid x_l, \\ 
			(\he\psi_N)(\mathfrak{R}_{N}^l(\x))&=-(\he\psi_N)(\x), &\forall 1\leq l\leq d. \label{7`}
		\end{align}
By assumption  \eqref{cd} on the period lengths, $\{\he \psi_N^{(j)}\}_{j=1}^{\#\Lambda_N}$ is an orthonormal class of $\ell^2(\Omega_N)$ consisting of eigenfunctions of $\mathfrak{H}_{\Omega_N}$.
		
		 We note that $\mathfrak{H}_{\Omega_N}$ is not exactly the form in Theorem  \ref{tmm1} since $\mathfrak{H}_{\Omega_N}$ has $2N+2$ periods in the $1$-periodic directions and $2N+1$ periods in the $2$-periodic directions while the periodic operators considered in Theorem  \ref{tmm1}  have the same numbers of periods in all directions. This is not a matter since  Theorem  \ref{tmm1} remains true for $\mathfrak{H}_{\Omega_N}$ for  the following reasons:
		 
		   Following the proof of \cite{MS23}, one  should  modify  the criterion concerning Floquet eigenvalues in \cite[equation (1.3)]{MS23}  by 
		\begin{equation}\label{ass}
			\lim _{N \rightarrow \infty} \sup _{\substack{\bm\theta_1 \in \T^d_N \\ \bm\theta_1 \neq \bm 1}} \frac{\#\left\{\bm\theta_2 \in \T^d_N:\  E_s\left(  \bm\theta_1+\bm\theta_2    \right)=E_w\left(\bm\theta_2\right)\right\}}{N^d}=0,
		\end{equation} where 
		$$\T^d_N:= \left \{\bm\theta\in [0,1]^d :\ \theta_l\in \frac{q_l}{2q_lN+2} [[1 ,\frac{2q_lN+2}{q_l} ]]\right \} .$$
	Since   \eqref{ass} can be verified  by the same argument of \cite{Liu24}, partial quantum ergodicity holds for $\mathfrak{H}_{\Omega_N}$.
		
		 As  the  proof in Section \ref{proof1}, the remaining part of the  proof follows  from considering  the diagonal observable $\mathfrak{a}_N$ on $\Omega_N$ defined by 
		$$\mathfrak{a}_N(\x,\x):=\begin{cases}
			a_N(\x,\x), \ \ &\x\in \Lambda_N,\\
			0, \ \ &\x\in \Omega_N\setminus \Lambda_{N}.
		\end{cases}$$
	\end{proof}

\section{Further comments}\label{fur}

\subsection{About the eigenfunction correlators in Theorem \ref{thm.}}\label{FEC}

For large $(q+1)$-regular  graphs $G_N=(V_N,E_N)$ that are spectral expanders with tree-like structures,   \cite{Ana17}  proved quantum ergodicity of adjacency matrices $\A_{G_N}$ for finite range observables $K_N$:
\begin{equation}\label{tree}
		\lim _{N \rightarrow \infty} \frac{1}{\#V_N} \sum_{j=1}^{\#V_N}\left| \langle \psi_N^{(j)}, K_N \psi_N^{(j)}\rangle-\langle K_N\rangle_{\lambda_N^{(j)}}\right|^2=0,
\end{equation}
where the eigenfunction correlator $$\langle K_N\rangle_{\lambda_N^{(j)}}:=\frac{1}{\# V_N}\sum_{x,y\in V_N} K_N(x,y)\Phi_{\lambda_N^{(j)}}(d(x,y))$$ is a universal quantity that depends only on $K_N$ and the   eigenvalue  $\lambda_N^{(j)}$. Here $\Phi_\lambda(n)$ is the spherical function of the $(q+1)$-regular tree,    which has an explicit form in
terms of Chebyshev polynomials.  This result was further extended  to the non-regular graphs where the eigenfunction correlator is expressed in terms of Green's function on the universal covering tree \cite{AS19a}.

However, for the adjacency matrix on $\Z^d$ with periodic conditions, \eqref{tree} does not  hold. It was proved in \cite{MS23} that the eigenfunction correlator is basis-dependent and non-universal in the sense that it can not be expressed as a function of the corresponding eigenvalue $\lambda_N^{(j)}$.

    Notably, \eqref{tree} also fails for the sequence of  $2$-regular graphs (discrete  $N$-circles $C_N$)  since it is not a sequence of expanders (see also   the discussion in  the first paragraph of \cite[Section 6.1.1]{Ana22}). However,  we note   the difference   that \eqref{tree} holds for the Dirichlet-truncated adjacency matrix on $\Z$, that is, the graphs with one edge removed from $C_N$:
\begin{prop}\label{pwucha}
    
Let $d=1$.
Then all the  eigenvalues of $\A_{\Gamma_N}$ are simple and the eigenbasis $\{\psi_N^{(j)}\}_{j=1}^{N}$ is explicit: $$\psi_N^{(j)}(x)=s_N^{(j)}(x)=\sqrt{\frac{2}{N+1}}\sin\frac{j\pi x}{N+1}$$ corresponding to the eigenvalue $\lambda_N^{(j)}=2\cos\frac{j\pi}{N+1}$.

Moreover, for any $|z|\leq R$, we have the approximation:  \begin{equation}\label{wucha}
	\langle s_N^{(j)},\rho_{z}s_N^{(j)}\rangle=\cos\left(\frac{|z|j\pi}{   N+1}\right)+O\left(\frac{1}{N}\right).
\end{equation}
Since the corrector allows for perturbations within $o(1)_{N\to\infty}$, it follows from  \eqref{wucha} that  replacing $ \langle\psi_N^{(j)},\rho_{\z}\psi_N^{(j)}\rangle$ in \eqref{dco} with $\cos\frac{|z|j\pi}{N+1}$ provides an explicit expression for the correlator in the one-dimensional case. Note that $\cos\frac{|z|j\pi}{N+1}=\Phi_{\lambda_N^{(j)}}(|z|)$, where $\Phi_\lambda(n)=\cos\left(n\arccos\frac{\lambda}{2}\right)$ is the  spherical function of $\Z$. Thus \eqref{tree} holds for $\A_{\Gamma_N}$.
\end{prop}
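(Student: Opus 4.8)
\textbf{The first claim} is essentially already contained in Section~\ref{pf1}: for $d=1$ the operator $\A_{\Gamma_N}$ is the tridiagonal matrix $\Delta_N$, which was diagonalized explicitly with eigenvalues $\lambda_N^{(j)}=2\cos\frac{j\pi}{N+1}$ and eigenfunctions $s_N^{(j)}(x)=\sqrt{\tfrac{2}{N+1}}\sin\frac{j\pi x}{N+1}$, $1\le j\le N$. The plan is simply to note that these $N$ numbers are pairwise distinct, since $t\mapsto\cos t$ is strictly decreasing on $(0,\pi)$ and the angles $\frac{j\pi}{N+1}$ are $N$ distinct points of $(0,\pi)$; an $N\times N$ real symmetric matrix with $N$ distinct eigenvalues has all eigenvalues simple and a unique orthonormal eigenbasis (up to signs), which is therefore forced to be $\{s_N^{(j)}\}_{j=1}^N$.

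\textbf{For the approximation \eqref{wucha}} I would proceed by a direct trigonometric computation. The case $z=0$ is trivial since $\|s_N^{(j)}\|=1$, and for $z<0$ one has $\rho_z=\rho_{-z}^{*}$ and $s_N^{(j)}$ real, so $\langle s_N^{(j)},\rho_z s_N^{(j)}\rangle=\langle s_N^{(j)},\rho_{|z|}s_N^{(j)}\rangle$; hence it suffices to handle $z=n\in\{1,\dots,R\}$. Spelling out $\rho_n$ (c.f.\ \eqref{Lz}) and applying $2\sin A\sin B=\cos(A-B)-\cos(A+B)$,
\begin{equation*}
\langle s_N^{(j)},\rho_{n}s_N^{(j)}\rangle=\frac{2}{N+1}\sum_{x=1}^{N-n}\sin\frac{j\pi x}{N+1}\sin\frac{j\pi(x+n)}{N+1}=\frac{N-n}{N+1}\cos\frac{jn\pi}{N+1}-\frac{1}{N+1}\sum_{x=1}^{N-n}\cos\frac{j\pi(2x+n)}{N+1}.
\end{equation*}
Since $\frac{N-n}{N+1}=1-\frac{n+1}{N+1}$, the first summand is $\cos\frac{jn\pi}{N+1}+O\!\big(\tfrac{n+1}{N}\big)$. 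For the second summand I would evaluate the cosine sum in closed form via $\sum_{x=1}^{M}\cos(a+bx)=\frac{\sin(Mb/2)}{\sin(b/2)}\cos\!\big(a+\tfrac{(M+1)b}{2}\big)$ with $a=\frac{jn\pi}{N+1}$, $b=\frac{2j\pi}{N+1}$, $M=N-n$; here $a+\frac{(M+1)b}{2}=j\pi$ and $\sin\frac{Mb}{2}=\sin\!\big(j\pi-\tfrac{(n+1)j\pi}{N+1}\big)=-(-1)^j\sin\frac{(n+1)j\pi}{N+1}$, so the sum collapses to $-\sin\frac{(n+1)j\pi}{N+1}\big/\sin\frac{j\pi}{N+1}$ (up to sign). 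By the elementary bound $|\sin(m\theta)|\le m|\sin\theta|$ this has absolute value $\le n+1\le R+1$, so the second summand is $O(1/N)$ uniformly in $j$. Adding the two pieces yields $\langle s_N^{(j)},\rho_n s_N^{(j)}\rangle=\cos\frac{jn\pi}{N+1}+O(1/N)$, which is \eqref{wucha}.

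\textbf{The only delicate point} is the oscillatory sum: the crude estimate $\frac{1}{N+1}\cdot\frac{1}{|\sin(j\pi/(N+1))|}$ is merely $O(1)$ when $j$ is close to $1$ or to $N$, so one must keep the numerator factor $\sin\frac{(n+1)j\pi}{N+1}$ produced by the exact evaluation — it is precisely its cancellation against $\sin\frac{j\pi}{N+1}$ that gains the extra power of $N$. I would also mention a cleaner, basis-free route giving the same bound: on $\Z$ one has the exact operator identity $\rho_n+\rho_{-n}=2T_n(\A_{\Z}/2)$ with $T_n$ the Chebyshev polynomial, and since $\A_{\Gamma_N}$ coincides with $\A_{\Z}$ on every row indexed outside the $n$-neighbourhood of $\{1,N\}$, the matrix $2T_n(\A_{\Gamma_N}/2)-(\rho_n+\rho_{-n})$ is supported on $O(n^2)$ boundary sites with entries $O(1)$; as $|s_N^{(j)}(x)|\le\sqrt{2/(N+1)}$ and $\langle s_N^{(j)},T_n(\A_{\Gamma_N}/2)s_N^{(j)}\rangle=T_n(\lambda_N^{(j)}/2)=\cos\frac{nj\pi}{N+1}$, one recovers \eqref{wucha} with the same $O(1/N)$. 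Finally, once \eqref{wucha} holds, substituting it into the correlator \eqref{dco} of Theorem~\ref{thm.} and using that an $o(1)_{N\to\infty}$ perturbation of the correlator leaves the limit in \eqref{1.} unchanged shows that $\langle\psi_N^{(j)},\rho_z\psi_N^{(j)}\rangle$ may be replaced by $\cos\frac{|z|j\pi}{N+1}=\Phi_{\lambda_N^{(j)}}(|z|)$, i.e.\ by the spherical function of $\Z$ evaluated at $|z|=d(x,x+z)$; this is exactly the universal, eigenvalue-only form \eqref{tree}.
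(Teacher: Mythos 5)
Your proposal is correct, and your primary argument for \eqref{wucha} takes a genuinely different route from the paper's. You evaluate $\langle s_N^{(j)},\rho_n s_N^{(j)}\rangle$ directly: product-to-sum gives the main term $\frac{N-n}{N+1}\cos\frac{jn\pi}{N+1}$ plus an oscillatory sum, which you compute in closed form as $-\sin\frac{(n+1)j\pi}{N+1}\big/\sin\frac{j\pi}{N+1}$ and bound by $n+1$ via $|\sin(m\theta)|\le m|\sin\theta|$; your remark that the crude bound $1/|\sin(j\pi/(N+1))|$ is only $O(1)$ for $j$ near $1$ or $N$, so that the numerator's cancellation is essential for uniformity in $j$, is exactly the delicate point and you handle it correctly. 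The paper instead proves the operator identity $T_n=\Phi_{\A_\Z}(n)$ from the shared three-term recursion \eqref{r1}--\eqref{r2}, observes via path counting that $\Phi_{\A_{\Gamma_N}}(|z|)$ and $\Phi_{\A_\Z}(|z|)$ agree except on $O_R(1)$ boundary rows, and concludes from $\langle s_N^{(j)},\Phi_{\A_{\Gamma_N}}(|z|)s_N^{(j)}\rangle=\Phi_{\lambda_N^{(j)}}(|z|)$ together with $\|s_N^{(j)}\|_\infty\le\sqrt{2/(N+1)}$ — this is precisely the ``cleaner, basis-free route'' you sketch as an alternative, so you have in effect reproduced the paper's proof as a second option. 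Your trigonometric computation is more elementary and self-contained, and yields the error term explicitly, but is tied to $d=1$ and the sine basis; the paper's spectral-calculus argument explains structurally why the correlator is universal (the truncation only perturbs $\Phi_{\A_\Z}(|z|)$ near the boundary) and is the version that generalizes to trees and non-regular graphs as in \cite{AS19b}. The first claim (simplicity via injectivity of $\cos$ on $(0,\pi)$) and the final substitution into \eqref{dco}, using that an $o(1)$ perturbation of the correlator and the replacement of $\#L_z$ by $N$ do not affect the limit in \eqref{1.}, are both fine.
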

\begin{proof}[Proof of   Proposition  \ref{pwucha}]
We will use  a property of the the spherical function (equation \eqref{rec} below). We refer to  \cite[Section 2.3]{AS19b} for a more general version on  the $(q+1)$-regular tree. Here we
include the proof of \eqref{rec} for the reader's convenience.  

  Recall that the spherical function  of $\Z$: $\Phi_\lambda(n)=\cos\left(n\arccos\frac{\lambda}{2}\right)$ satisfies $\Phi_\lambda(0)=1$, $\Phi_\lambda(1)=\frac{\lambda}{2}$ and the recursive formula for $n\geq 1$
\begin{equation}\label{r1}
	\Phi_\lambda(n+1)=\lambda\Phi_\lambda(n)-\Phi_\lambda(n-1).
\end{equation}
Define the operator $T_n$ on $\ell^2(\Z)$ such that $T_0=\operatorname{Id}_{\Z}$ and for $n\geq 1$, 
$$T_n(x,y):=\begin{cases}
	\frac{1}{2}, \ \ &|x-y|=n, \\
	0, \ \ &\text{otherwise}.
\end{cases}$$
Note that  the operator $T_n$ satisfies  the recursive formula for $n\geq 1$
\begin{equation}\label{r2}
T_{n+1}=\A_\Z T_n-T_{n-1}.
\end{equation}
Using  \eqref{r1} and \eqref{r2} and proving by induction on $n$, we have the equation 
\begin{equation}\label{rec}
T_n=	\Phi_{\A_\Z}(n),
\end{equation}
where 	$\Phi_{\A_\Z}(n)$ is defined by replacing $\lambda$ by $\A_\Z$ in the polynomial $\lambda\mapsto 	\Phi_\lambda(n)$. 

Note that $$\A_{\Z}^n(x,y)=\#\{
\text{paths of length $n$  from $x$ to $y$ in $\Z$}\},$$
$$\A_{\Gamma_N}^n(x,y)=\#\{\text{paths of length $n$  from $x$ to $y$ in $\Gamma_N$}\}.$$
Thus for $|z|\leq R$, we have 
\begin{equation}\label{ns}
	\Phi_{\A_{\Gamma_N}}(|z|)(x,y)=\Phi_{\A_\Z}(|z|)(x,y)\ \ \text{ for $x\in [[1+R,N-R]]$, $y\in \Gamma_{N}$}
\end{equation}
and both $\Phi_{\A_{\Gamma_N}}(|z|)$ and $\Phi_{\A_\Z}(|z|)$ are supported at distance $\leq R$ from the diagonal.

Recall \eqref{1716} that $s_N^{(j)}(x)=\sqrt{\frac{2}{N+1}}\sin\frac{j\pi x}{N+1}$ is an eigenfunction of $\A_{\Gamma_N}$ with corresponding eigenvalue $\lambda_N^{(j)}=2\cos\frac{j\pi}{N+1}$. 
Thus by \eqref{rec} and \eqref{ns}, we have 
\begin{align*}
&	\left|\Phi_{\lambda_N^{(j)}}(|z|)-\langle s_N^{(j)},T_{|z|}s_N^{(j)}\rangle\right|\\
=&\left|\langle s_N^{(j)},\Phi_{\A_{\Gamma_N}}(|z|)s_N^{(j)}\rangle-\langle s_N^{(j)},\Phi_{\A_\Z}(|z|)s_N^{(j)}\rangle\right|\\
\leq & \frac{2}{N+1}\sum_{x,y\in \Gamma_{N}}	\left|	\Phi_{\A_{\Gamma_N}}(|z|)(x,y)-\Phi_{\A_\Z}(|z|)(x,y)\right|\\
\leq & \frac{C_R}{N}
\end{align*}
Noting  that $\rho_z^*=\rho_{-z}$ and 	$\langle s_N^{(j)},\rho_{z}s_N^{(j)}\rangle$ is real, we have 
\begin{equation*}\label{sy}
		\langle s_N^{(j)},\rho_{z}s_N^{(j)}\rangle=\frac{1}{2}\langle s_N^{(j)},(\rho_{z}+\rho_{-z})s_N^{(j)}\rangle=\langle s_N^{(j)},T_{|z|}s_N^{(j)}\rangle.
\end{equation*}
Since  $\Phi_{\lambda_N^{(j)}}(|z|)$ is exactly $\cos\frac{|z|j\pi}{N+1}$ and $R$ is fixed, we finish the proof.
\end{proof}

However, for general $d\geq 2$,  the quantity $ \langle\psi_N^{(j)},\rho_{\z}\psi_N^{(j)}\rangle$ depends on the choice of basis, which  is  non-universal.  For the proof of no correlator universality, we refer to \cite[Section 5.1.2]{MS23} (with the  exponential basis therein replaced by a Dirichlet-adapted sine basis).

\subsection{Further questions}\label{zh}

		In the proof of Theorem \ref{ps}, we crucially use the assumption \eqref{cd} to establish the correspondence of eigenfunctions  between    Dirichlet conditions and periodic conditions. This correspondence does not hold when period lengths exceed two, hence the eigenfunction correspondence might  not be extended  to the general case.  
            \begin{flushleft}\rm 
		\textbf{Question 1:}   
 Does partial quantum ergodicity   hold for $\mathcal{H}_{\Lambda_N}$ in general cases? 
		\end{flushleft}
        
      The  Dirichlet conditions operators  can be viewed as ``small''  perturbations of periodic conditions operators:      $\mathfrak{H}_{\Lambda_N}-\mathcal{H}_{\Lambda_N}$  is  a self-adjoint 0-1 matrix with rank $O(N^{d-1})$.  
    More generally,  consider  self-adjoint operators $\mathcal{W}_{\Lambda_N}$ defined as  
\begin{equation}\label{W}
     \mathcal{W}_{\Lambda_N} = \mathfrak{H}_{\Lambda_N}+\mathcal{P}_{\Lambda_N},
\end{equation}
 where $\mathcal{P}_{\Lambda_N}$ is some   ``small'' self-adjoint perturbation. One can ask  quantum ergodicity questions for $\mathcal{W}_{\Lambda_N}$. 
       Following the methodology in Proof 1, a potentially viable approach would be to compute 	$$a_N^{\infty,\mathcal{W}}:=\lim_{T\to \infty}\frac{1}{T} \int_0^T e^{-{\rm i}t\mathcal{W}_{\Lambda_N}}a_Ne^{{\rm i}t\mathcal{W}_{\Lambda_N}}dt$$ and compare  it with  the periodic condition case $$a_N^{\infty,\mathfrak{H}}:=\lim_{T\to \infty}\frac{1}{T} \int_0^T e^{-{\rm i}t\mathfrak{H}_{\Lambda_N}}a_Ne^{{\rm i}t\mathfrak{H}_{\Lambda_N}}dt.$$ 
		However, since  Bloch-Floquet theory may not be  adapted to $\mathcal{W}_{\Lambda_N}$,  the computation of $a_N^{\infty,\mathcal{W}}$ is more complicated than  $a_N^{\infty,\mathfrak{H}}$.
        
        By \eqref{4}, proving  partial quantum ergodicity for $\mathcal{W}_{\Lambda_N}$ reduces to proving  
		\begin{equation}\label{chabei}
			\lim_{N\to\infty}\frac{1}{N^d}	\|a_N^{\infty,\mathcal{W}}-a_N^{\infty,\mathfrak{H}}\|_{{\rm HS}}^2=0.
		\end{equation} 
        This motivates investigating the stability of  time-averaged  dynamics under self-adjoint perturbations.
         	\begin{flushleft}\rm 
		\textbf{Question 2:}   
Under  what condition on    $\mathcal{P}_{\Lambda_N}$ in \eqref{W} does \eqref{chabei}  hold?
		\end{flushleft}

\vspace{3mm}
\noindent\textbf{Acknowledgments.} \;
Hongyi Cao is supported by NSFC 123B2004.
Shengquan Xiang is partially  supported by NSFC 12301562.

\end{document}